\def\R{\mathbb R}
\def\H{\mathcal H}
\def\ric{\mathrm{Ric}}
\def\area{\mathrm{area}}
\def\d{\mathrm{div}}
\newtheorem{thm}{Theorem}[section]
\newtheorem{lemm}[thm]{Lemma}
\newtheorem{cor}[thm]{Corollary}
\newtheorem{prop}[thm]{Proposition}
\theoremstyle{remark}
\newtheorem*{rmk}{Remark}
\theoremstyle{definition}
\title{Rigidity of min-max minimal spheres in three-manifolds}
\author{F. C. Marques and A. Neves}
\address{Instituto de Matem\'atica Pura e Aplicada (IMPA) \\ Estrada Dona Castorina 110 \\ 22460-320 Rio de Janeiro \\ Brazil}
\address{Imperial College London \\ Huxley Building \\ 180 Queen's Gate \\ London SW7 2RH \\ United Kingdom}
\thanks{The first author was supported by CNPq-Brazil, FAPERJ, and Math-Amsud. The second author was supported by Marie Curie IRG grant.}
\begin{document}

\begin{abstract}
In this paper we consider min-max minimal surfaces in three-manifolds and prove some rigidity results.
For instance,  we prove that any metric on a $3$-sphere which has scalar curvature greater than or equal to $6$  and is not round must have an embedded minimal sphere of area strictly smaller than $4\pi$ and index at most one.
If the Ricci curvature is positive we also prove sharp estimates for the width.
\end{abstract}

\maketitle

\section{Introduction}

Let $M$ be a compact Riemannian three-manifold. It is well-known that lower bounds on the scalar curvature of $M$  give some information on the space of minimal surfaces. Several
rigidity theorems have been obtained assuming the existence of an area-minimizing surface of some kind (\cite{bben}, \cite{bbn}, \cite{Cai-Galloway}, \cite{Eichmair}, \cite{nunes}, \cite{Schoen-Yau2}), but no known result asserts rigidity under the presence of a minimal surface produced by min-max methods. In this paper we prove theorems in that direction.

Let $g$ be a metric on the three-sphere $S^3$. Before we state our first theorem, we introduce the definition of width. We start with the family $\{\overline{\Sigma}_t\}$ of level sets of the height function $x_4:S^3 \subset \mathbb{R}^4 \rightarrow \mathbb{R}$, i.e., 
$$\overline{\Sigma}_t=\{ x \in S^3: x_4=t\}$$ 
for $t \in [-1,1]$. Then we define $\overline{\Lambda}$ to be the collection of all  families $\{\Sigma_t\}$ with the property that  $\Sigma_t=F_t(\overline{\Sigma}_t)$ for some smooth one-parameter family of
diffeomorphisms $F_t$ of $S^3$, all of which isotopic to the identity. The {\it width} of $(S^3,g)$ is the min-max invariant
$$
W(S^3,g)=\inf_{\{\Sigma_t\}\in\overline{\Lambda}}\, \sup_{t\in [-1,1]}|\Sigma_t|,
$$
where $|\Sigma|$ denotes the surface area of $\Sigma$.

We prove (Theorem \ref{thm.sphere}):
\begin{thm}\label{mainthm.1} Let $g$ be a metric of positive Ricci curvature on $S^3$, with scalar curvature $R \geq 6$.  There exists an embedded minimal sphere
$\Sigma$, of index one, such that
$$W(S^3,g)=|\Sigma| \leq 4\pi.$$
The  equality $W(S^3,g)=4\pi$ holds if and only if $g$  has constant sectional curvature one.
\end{thm}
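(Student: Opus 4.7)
My plan has three phases: extract a minimal sphere from min-max theory, prove the sharp area bound via a Hersch-type argument combined with Gauss--Bonnet, and handle rigidity by propagating information off the sphere.

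In the first phase I apply a Simon--Smith / Colding--De Lellis min-max construction to the saturation $\overline{\Lambda}$, producing a stationary integral varifold supported on a disjoint union of smooth embedded minimal surfaces with positive integer multiplicities, of total mass $W(S^3,g)$ and Morse index at most one. Positive Ricci rules out stable minimal surfaces---testing the Jacobi operator with $\varphi \equiv 1$ forces $|A|^2 + \ric(\nu,\nu) \leq 0$, contradicting $\ric > 0$---so each component contributes index at least one; together with the index-one bound this leaves exactly one component of multiplicity one, namely an embedded minimal two-sphere $\Sigma$ of index one with $|\Sigma| = W(S^3,g)$.

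The central step is the area bound $|\Sigma| \leq 4\pi$. Let $\phi_1 > 0$ be the first eigenfunction of the Jacobi operator $L = -\Delta - (|A|^2 + \ric(\nu,\nu))$ on $\Sigma$, whose first eigenvalue is negative by index one. By a Hersch--Li--Yau balancing argument applied with respect to the probability measure $\phi_1\, dA / \int \phi_1\, dA$, I obtain a conformal diffeomorphism $u = (u_1,u_2,u_3) : \Sigma \to S^2 \subset \R^3$ such that $\int_\Sigma u_i \phi_1 \, dA = 0$ for every $i$. Each $u_i$ is then $L^2$-orthogonal to $\phi_1$, and index one yields
\[
Q(u_i) := \int_\Sigma \bigl(|\nabla u_i|^2 - (|A|^2 + \ric(\nu,\nu)) u_i^2\bigr)\, dA \geq 0.
\]
Summing over $i$ and using $\sum_i u_i^2 \equiv 1$ together with the conformal identity $\sum_i \int_\Sigma |\nabla u_i|^2\, dA = 8\pi$ for a degree-one conformal map to the round sphere, one obtains $\int_\Sigma (|A|^2 + \ric(\nu,\nu))\, dA \leq 8\pi$. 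Combining this with the Gauss equation $K_\Sigma = \tfrac{R}{2} - \ric(\nu,\nu) - \tfrac{1}{2}|A|^2$ on a minimal surface and Gauss--Bonnet $\int_\Sigma K_\Sigma\, dA = 4\pi$ gives
\[
\int_\Sigma R\, dA \;=\; 8\pi + 2\!\int_\Sigma \ric(\nu,\nu)\, dA + \int_\Sigma |A|^2\, dA \;\leq\; 8\pi + 2\!\int_\Sigma(\ric(\nu,\nu) + |A|^2)\, dA \;\leq\; 24\pi,
\]
and the hypothesis $R \geq 6$ then yields $6|\Sigma| \leq 24\pi$, i.e.\ $|\Sigma| \leq 4\pi$.

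I expect the main remaining obstacle to be the rigidity statement. Equality $|\Sigma| = 4\pi$ forces equality throughout: $A \equiv 0$ on $\Sigma$, $R \equiv 6$ on $\Sigma$, $\int_\Sigma \ric(\nu,\nu)\, dA = 8\pi$, and $L u_i = 0$ for every $i$, making $\Sigma$ a totally geodesic round two-sphere of area $4\pi$ carrying three independent Jacobi fields $u_i$ satisfying $-\Delta u_i = \ric(\nu,\nu) u_i$. To propagate this to global rigidity I would analyze the equidistant foliation $\Sigma_t$ of a neighborhood of $\Sigma$ via the Riccati equation for its shape operator, combined with the Gauss equation and the bound $R \geq 6$, to show each $\Sigma_t$ is also totally geodesic and round of area $4\pi$; a standard connectedness/continuation argument then extends the foliation to all of $(S^3,g)$ and identifies $g$ with the round metric.
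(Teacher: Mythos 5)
Your first two phases track the paper's argument: the existence of an index-one embedded minimal sphere realizing the width is Theorem \ref{heegaard.genus}, and your Hersch--Li--Yau computation giving $\int_\Sigma(\ric(\nu,\nu)+|A|^2)\,d\mu\le 8\pi$ and hence $6|\Sigma|\le 24\pi$ is exactly Proposition \ref{prop.index.estimate}(iii). One caveat: your phase one assumes the Simon--Smith/Colding--De Lellis min-max varifold comes with a Morse index upper bound of one, which is not available in that framework (only the genus bound of De Lellis--Pellandini is). The paper gets the index a different way: it takes $\Sigma_0$ of least area among embedded minimal spheres, shows it is a Heegaard splitting, builds an explicit sweepout through $\Sigma_0$ whose maximal slice is $\Sigma_0$ with $f''(0)<0$ (using Theorem \ref{boundary.min-max} on the two mean-convex handlebodies $\Sigma_{\pm\varepsilon}$ bound, together with Lemma \ref{lemm.no.stable} to rule out other components), and then deduces index exactly one from Proposition \ref{sweepout.aux}. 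This part of your plan is repairable but, as written, incomplete.

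The genuine gap is the rigidity phase. The equality analysis \emph{on} $\Sigma$ is fine ($A\equiv 0$, $R\equiv 6$ on $\Sigma$, $\lambda_2(L)=0$ with the $u_i$ Jacobi fields), but the proposed continuation --- showing each equidistant surface $\Sigma_t$ is ``totally geodesic and round of area $4\pi$'' --- is false already in the model you are trying to characterize: in the round $S^3$ the surfaces equidistant from a great $2$-sphere are totally umbilic with principal curvatures $\tan t$ and area $4\pi\cos^2 t$, not totally geodesic and not of constant area. The constant-area foliation/splitting arguments of \cite{bben}, \cite{Cai-Galloway}, \cite{nunes} depend essentially on an area-minimizing or stability hypothesis and conclude a local \emph{product} structure, which is the opposite of what must happen here; for an index-one sphere there is no known local mechanism that promotes the equality data on $\Sigma$ to roundness of $g$. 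The paper's rigidity proof is global and of a different nature: it runs the Ricci flow $g(t)$ from $g$, uses the first variation of area of the optimal sweepout together with Proposition \ref{prop.index.estimate}(iii) to prove $W(S^3,g(t))\ge 4\pi(1-4t)$ (Proposition \ref{width.inequality.1}), while the maximum principle gives $\min_M R(g(t))\ge 6/(1-4t)$ and the Hersch estimate applied at time $t$ gives $\min_M R(g(t))\,W(S^3,g(t))\le 24\pi$; these pinch to force $\min_M R(g(t))=6/(1-4t)$, and the strong maximum principle in the evolution equation for $R$ then forces $g$ to be Einstein, hence of constant curvature one. Your phase three would need to be replaced by an argument of this kind.
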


The proof of the rigidity statement uses short-time existence for  Hamilton's Ricci flow and the Maximum Principle, in the same spirit as in  \cite{bben}. Connections between Ricci flow
and the theory of minimal surfaces have been previously considered in \cite{hamilton-survey} and \cite{Colding-Minicozzi1}.

\medskip

\begin{rmk}
 It is interesting to compare Theorem \ref{mainthm.1} with the rigidity proven by Llarull \cite{Llarull} (any distance increasing deformation of the round sphere decreases scalar curvature somewhere). 
\end{rmk}
\medskip

Simon and Smith \cite{smith} proved that any metric on $S^3$  admits an embedded minimal sphere (see \cite{Jost} and \cite{white2} for related results). For  metrics of positive scalar curvature, we have (Theorem \ref{thm.sphere.scalar.curvature}):
\begin{thm}\label{mainthm.2}
Let $g$ be a metric  on $S^3$ with scalar curvature $R \geq 6$. If $g$ does not have constant sectional curvature one, then there exists an embedded minimal sphere $\Sigma$, of index zero or one, with  $|\Sigma| < 4\pi$.
\end{thm}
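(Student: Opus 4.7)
The strategy is to apply the Simon--Smith min-max construction, bound the area of the resulting minimal sphere by $4\pi$ via a Hersch-type balancing, and rule out the equality case.

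By the Simon--Smith theorem \cite{smith}, there is an embedded minimal two-sphere $\Sigma\subset(S^3,g)$ realizing the width, $|\Sigma|=W(S^3,g)$, and of Morse index at most one. To bound its area, I would exploit the three-parameter conformal group of the round $S^2$ to choose a conformal, degree-one map $\phi:\Sigma\to S^2\subset\R^3$ whose three coordinate functions $\phi_1,\phi_2,\phi_3$ are $L^2$-orthogonal to the unstable direction of the Jacobi operator. The index hypothesis then yields $Q(\phi_i,\phi_i)\geq 0$ for each $i$; summing, using the pointwise identity $\sum_i\phi_i^2\equiv 1$ together with $\int_\Sigma\sum_i|\nabla\phi_i|^2=8\pi$ (from conformality and degree one), gives
$$\int_\Sigma(|A|^2+\ric(\nu,\nu))\leq 8\pi.$$
Tracing the Gauss equation over $\Sigma$ and invoking Gauss--Bonnet together with $R\geq 6$ then produces
$$\int_\Sigma|A|^2+6|\Sigma|\leq\int_\Sigma|A|^2+\int_\Sigma R\leq 24\pi,$$
so $|\Sigma|\leq 4\pi$. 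If the inequality is strict, the theorem is proved.

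The delicate case is $|\Sigma|=4\pi$. Saturation of the chain forces $A\equiv 0$ on $\Sigma$, $R\equiv 6$ along $\Sigma$, and each $\phi_i$ in the kernel of the Jacobi operator. Writing the conformal map as $\phi^\ast g_{S^2}=e^{2\omega}g_\Sigma$ and using the dimension-two conformal change formula for the Laplacian, the Jacobi equation becomes $\ric(\nu,\nu)=2e^{2\omega}$ on $\Sigma$. Combining with $R\equiv 6$ and the conformal change formula for the Gauss curvature of $\Sigma$ reduces matters to the semilinear equation $\Delta_{g_\Sigma}\omega=3-3e^{2\omega}$ subject to $\int_\Sigma e^{2\omega}=4\pi$; since $\omega(1-e^{2\omega})\leq 0$ with equality only when $\omega=0$, multiplying by $\omega$ and integrating by parts forces $\omega\equiv 0$. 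Hence $\Sigma$ is isometric to the unit round two-sphere with $\ric(\nu,\nu)\equiv 2$ on $\Sigma$.

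The main obstacle is promoting this rigidity along $\Sigma$ to global rigidity of $(S^3,g)$, contradicting the hypothesis that $g$ is not round. Following the strategy of the rigidity half of Theorem~\ref{mainthm.1}, I would appeal to Hamilton's Ricci flow: the condition $R\geq 6$ is preserved by $\partial_t R=\Delta R+2|\ric|^2\geq\Delta R+\frac{2}{3}R^2$, and by the strong maximum principle either $g$ is Einstein (and hence, in dimension three, of constant sectional curvature one) or $R(g(t))>6$ strictly for every $t>0$. In the latter case, applying the above Hersch-type bound to $(S^3,g(t))$ yields a min-max minimal sphere $\Sigma_t\subset(S^3,g(t))$ with $|\Sigma_t|<4\pi$ strictly, and extracting a smooth sub-sequential limit as $t\to 0^+$ should then produce an embedded minimal sphere in $(S^3,g)$ of index at most one and area strictly less than $4\pi$. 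The most delicate point will be controlling this Ricci flow limit---preserving sphere topology, the index bound, and the strict area inequality through $t\to 0^+$---without the positive Ricci assumption of Theorem~\ref{mainthm.1}.
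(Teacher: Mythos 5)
Your opening step already contains a gap: the Simon--Smith construction does not come with a Morse index bound. The claim that min-max spheres have index at most one was announced by Pitts--Rubinstein without proof, and this paper does not assume it; instead it splits into two cases. If $(S^3,g)$ carries a stable embedded minimal sphere, Proposition \ref{prop.index.estimate}, part (i) (the stable version of Hersch's trick) immediately gives $6|\Sigma|\leq 8\pi$, i.e.\ $|\Sigma|\leq 4\pi/3<4\pi$, and the theorem is proved with index zero. Only in the absence of stable minimal spheres (the $(\star)_0$-condition) does the paper produce an index-one min-max sphere, and it does so not from Simon--Smith alone but from the special sweepout of Theorem \ref{heegaard.genus} together with Proposition \ref{sweepout.aux}. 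Granting index one, your Hersch-type balancing is fine and reproduces Proposition \ref{prop.index.estimate}, part (iii), giving $|\Sigma|\leq 4\pi$.

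The second and more serious gap is the equality case. Your plan --- run the Ricci flow, obtain $|\Sigma_t|<4\pi$ strictly for $t>0$, and pass to the limit $t\to 0^+$ --- cannot work as stated: a subsequential limit of minimal spheres with areas strictly below $4\pi$ only has area $\leq 4\pi$, so the strict inequality is exactly what is lost in the limit. (Your intrinsic analysis on $\Sigma$ also has a flaw: multiplying $\Delta\omega=3(1-e^{2\omega})$ by $\omega$ and integrating by parts gives $\int_\Sigma|\nabla\omega|^2=3\int_\Sigma\omega(e^{2\omega}-1)$, in which both sides are nonnegative, so there is no sign contradiction and no conclusion that $\omega\equiv 0$; and in any case rigidity of the induced metric on $\Sigma$ says nothing about $(S^3,g)$ away from $\Sigma$.) The paper avoids limits of minimal surfaces altogether: it shows that along the Ricci flow the width can decrease at rate at most $16\pi$ (Proposition \ref{width.inequality.1}, which uses the optimal index-one sweepout at a time $t'$ as a competitor at nearby times), while the maximum principle bound $\min_M R(g(t))\geq 6/(1-4t)$ combined with $\min_M R(g(t))\cdot W(g(t))\leq 24\pi$ forces the width to decay at least that fast when $W(g(0))=4\pi$. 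These two bounds pin down $\min_M R(g(t))=6/(1-4t)$ exactly, and the maximum principle applied to the evolution equation for $R$ then forces $g$ to be Einstein, hence round. Some such global, flow-level monotonicity of the width is needed; pointwise rigidity on $\Sigma$ plus a naive limit will not close the argument.
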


\begin{rmk} Toponogov \cite{Toponogov} proved that the length of any simple closed geodesic contained in a two-sphere with scalar curvature $R \geq 2$ is at most $2\pi$. See \cite{Hang-Wang2} for a different proof. It is not difficult to see that there is no such bound for the area of a minimal sphere (consider a long piece of a cylinder around the $x_1$ axis in $\mathbb{R}^4$, capped at both ends in such a way that $R\geq 6$ and  the intersection with the $\{x_2=0\}$ hyperplane is a minimal sphere).  Thus Theorem \ref{mainthm.2} can be seen as 
a three-dimensional version of Toponogov Theorem.
\end{rmk}

\medskip

The search for scalar curvature rigidity results for the standard sphere is motivated by the Positive Mass Theorems in both the Euclidean (\cite{Schoen-Yau1}, \cite{Witten}) and the hyperbolic (\cite{Chrusciel-Herzlich}, \cite{Wang}, see also \cite{Andersson-Dahl}, \cite{Min-Oo1}) settings. In \cite{Brendle-Marques-Neves}, Brendle, Marques and Neves constructed smooth metrics on the hemisphere $S^n_+$, if $n\geq 3$,  that coincide with the standard
metric near the equator $\partial S^n_+$, the scalar curvature satisfies $R \geq n(n-1)$ everywhere, and $R>n(n-1)$ somewhere. These metrics are counterexamples to the so-called Min-Oo's Conjecture \cite{Min-Oo2}, which was a natural proposal for a Positive Mass Theorem in the spherical setting.   The theorems we prove in this paper are reminiscent of this conjecture.

When the ambient manifold is not a sphere, we prove some existence and rigidity theorems assuming the Ricci curvature is positive  (Theorem \ref{ricci.rigidity}): 
\begin{thm}\label{mainthm.3}
Let $g$ be a metric of positive Ricci curvature on $M=S^3/\Gamma$, with scalar curvature $R \geq 6$. If $(M,g)$ is not isometric to the standard sphere, then there exists an embedded minimal surface $\Sigma$, of index zero or one, with  $|\Sigma| < 4\pi$.
\end{thm}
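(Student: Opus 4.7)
The plan is to reduce to Theorem~\ref{mainthm.2} by passing to the universal cover. By Myers' theorem, positive Ricci curvature makes $\Gamma = \pi_1(M)$ finite, so $\pi : S^3 \to M$ is a finite Riemannian covering for the pulled-back metric $\tilde g = \pi^* g$. This lifted metric has positive Ricci, scalar curvature at least $6$, and fails to be round (else $(M,g)$ would be a standard quotient). Theorem~\ref{mainthm.2} applied to $(S^3, \tilde g)$ then produces an embedded minimal sphere $\tilde\Sigma \subset S^3$ of index $0$ or $1$ with $|\tilde\Sigma| < 4\pi$.

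I would then analyze the action of $\Gamma$ on $\tilde\Sigma$ by letting $\Gamma_0 \leq \Gamma$ denote its setwise stabilizer. Because $\Gamma$ acts freely on $S^3$, $\Gamma_0$ acts freely on the $2$-sphere $\tilde\Sigma$, and since only $\mathbb{Z}_2$ admits a free action on $S^2$, we have $|\Gamma_0| \leq 2$. In the favorable case $\Gamma_0 = \Gamma$, either $\Gamma$ is trivial (and Theorem~\ref{mainthm.2} already gives the conclusion) or $\Gamma = \mathbb{Z}_2$, so $M = \RP^3$ and $\Sigma := \tilde\Sigma/\Gamma = \RP^2$ is an embedded minimal surface in $M$ of area $|\tilde\Sigma|/2 < 2\pi$. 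The index is bounded by that of $\tilde\Sigma$, since any negative Jacobi direction on $\Sigma$ pulls back to a $\Gamma$-invariant negative direction on $\tilde\Sigma$, giving $\mathrm{ind}(\Sigma) \leq \mathrm{ind}(\tilde\Sigma) \leq 1$.

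The delicate case is $\Gamma_0 \subsetneq \Gamma$, where the immersion $\pi(\tilde\Sigma)$ is not embedded. Here I would abandon the lifted sphere and run min-max directly on $M$ using a Heegaard-type sweepout adapted to the topology of $M$; positive Ricci curvature should again pin the index at most one. For the area bound, when the min-max surface $\Sigma$ is a sphere I would apply Hersch's balancing trick, using the coordinate functions of a conformal parametrization $\Sigma \to S^2$ chosen orthogonal to the unstable direction, together with the hypothesis $R \geq 6$; in higher genus an analogous argument uses the first eigenfunctions of the Laplacian and Gauss--Bonnet. Strictness $|\Sigma| < 4\pi$ would come from a Ricci flow rigidity argument in the spirit of Theorem~\ref{mainthm.1}: equality in the area estimate would force $g$ to be round.

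The main obstacle is precisely this non-invariant case. Constructing a sweepout of $M$ whose maximum area is bounded by $4\pi$, and establishing strict inequality without the ambient rotational symmetry that was available for $(S^3, g)$, both require work beyond what the universal cover supplies; one likely needs either an equivariant min-max on $(S^3, \tilde g)$ producing $\Gamma$-invariant surfaces, or a careful analysis of how the Ricci-flow/maximum-principle argument from Theorem~\ref{mainthm.1} descends to the quotient.
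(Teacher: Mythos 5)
Your reduction breaks down at the very first step. The hypothesis is only that $(M,g)$ is not isometric to the standard \emph{sphere}; it does not exclude $(M,g)$ being a nontrivial standard spherical space form such as $\RP^3$, a lens space $L(p,q)$, or the Poincar\'e homology sphere with the round metric. For all of these the pulled-back metric $\tilde g$ on $S^3$ \emph{is} round, so Theorem \ref{mainthm.2} applied upstairs produces nothing (the round $S^3$ has no minimal sphere of area $<4\pi$), and your parenthetical ``else $(M,g)$ would be a standard quotient'' is not a contradiction with the hypotheses. These cases must be handled, and they already force one to work directly on $M$ rather than on the cover. Beyond that, you correctly identify the case $\Gamma_0\subsetneq\Gamma$ as the real difficulty and essentially concede it; but the tools you propose there are insufficient. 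Hersch's trick (Proposition \ref{prop.index.estimate}) gives $k_0|\Sigma|\le 24\pi+16\pi(g(\Sigma)/2-[g(\Sigma)/2])$ for an index-one orientable surface, which with $k_0=6$ yields only $|\Sigma|\le 16\pi/3>4\pi$ already for genus one, so no eigenfunction balancing argument alone can close the higher-genus case; the paper explicitly remarks that its Ricci flow estimates are strictly better than what Hersch's trick provides.

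The paper's actual proof (Theorem \ref{ricci.rigidity}) avoids the cover entirely. If $M$ contains an embedded non-orientable surface, it minimizes area in that class (Proposition \ref{nonorientable}) and bounds the minimizer by $\mathcal{A}(M,g)\le 2\pi$ via a Ricci flow monotonicity for $\mathcal{A}$ (Theorem \ref{A.inequality}). Otherwise positive Ricci curvature gives the $(\star)_h$-condition for the Heegaard genus $h$ of $M$, Theorem \ref{heegaard.genus} produces an index-one min-max surface realizing $W(M,\Lambda^h,g)$, and the crucial bound $W(M,\Lambda^h,g)\le 4\pi-2\pi[h/2]$ with rigidity at $4\pi$ comes from evolving $g$ by Ricci flow: the width decreases at a controlled rate (Proposition \ref{width.inequality.1}), tends to zero at the maximal time $T$ by Hamilton's theorem, and $R\ge 6$ forces $T\le 1/4$ by the maximum principle, with equality only for Einstein metrics and $M=S^3$. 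If you want to salvage your outline, you would need to replace the Hersch-type area bound in the non-invariant case by this Ricci flow width estimate on $M$ itself, and to treat the standard quotients (where the cover is round) separately.
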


The surface $\Sigma$ is either   non-orientable with $|\Sigma|\leq 2\pi$ or orientable and    constructed by min-max methods. The proof of Theorem \ref{mainthm.3}, including the area estimate, is based on Hamilton's convergence theorem \cite{hamilton} for  the Ricci flow of three-manifolds with positive Ricci curvature. Nonetheless, we conjecture that the assumption of positive Ricci curvature in Theorem \ref{mainthm.3} is not necessary.

Finally, it follows from the work of Pitts \cite{pitts} that any compact Riemannian three-manifold $M$ admits an embedded  minimal surface (compact). The techniques of this paper give some extra geometric information on the surface. If $M$ is orientable, for instance, we prove that the minimal surface can be chosen to have index less  than or equal to one (Corollary \ref{consequence}).

\medskip

{\bf Acknowledgements:} The authors thank Hossein Namazi for many useful discussions.


\section{Min-max Minimal surfaces}

 Let $M$ be a compact Riemannian three-manifold, possibly with boundary. 
We begin with some  definitions. The 2-dimensional Hausdorff measure of $\Sigma \subset M$ will be denoted by $\mathcal{H}^2(\Sigma)$. If $\Sigma$ is
a surface, then $\mathcal{H}^2(\Sigma)$ equals the area  $|\Sigma|$ of $\Sigma$. The surfaces in this paper will be assumed to be connected and closed, unless otherwise indicated.

Let $I = [a,b] \subset \mathbb{R}$ be a closed interval. Let $\{\Sigma_t\}_{t \in I}$ be a family of closed subsets of $M$ with finite $\mathcal{H}^2$-measure such that 
\begin{enumerate}
\item[(c1)] $\mathcal{H}^2(\Sigma_t)$ is a continuous function of $t \in I$, 
\item[(c2)] $\Sigma_t$ converges to $\Sigma_{t_0}$, in the Hausdorff topology, as $t \rightarrow t_0$.
\end{enumerate}

We say that $\{\Sigma_t\}$ is a  {\it generalized family of surfaces} (or a {\it sweepout}) if there are finite sets $T\subset I$ and $P \subset M$ such that:
\begin{enumerate}
\item[(a)] if $t \in I \setminus T$ then $\Sigma_t$ is a surface in $M$, 
\item[(b)] if $t \in T$ then either $\Sigma_t \setminus P$ is a surface in $M$  or else $\H^2(\Sigma_t)=0$,
\item[(c)] $\Sigma_t$ varies smoothly in $[0,1]\setminus T$,
\item[(d)] if $t\in T$ and $\H^2(\Sigma_t)\neq 0$, then $\Sigma_{\tau}$ converges smoothly to $\Sigma_t$ in $M\setminus P$ as $\tau \rightarrow t$.
\end{enumerate}

\begin{rmk}
Conditions (b) and (d) are stated slightly differently from the corresponding conditions in  \cite{delellis-genus} in order to allow  $\Sigma_t$ to be a graph for some $t\in T$. Because  the area of such $\Sigma_t$ is zero, all  results in \cite{delellis-genus} carry through without having to modify their proofs.
\end{rmk}


Let  $\Lambda$ be a collection of generalized families of surfaces. We denote by $\mbox{Diff}_0$ the set of diffeomorphisms of $M$ which are isotopic to the identity map. If $\partial M \neq \emptyset$  we require the isotopies to leave some neighborhood of $\partial M$ fixed.

The set $\Lambda$  is  {\em saturated} if given a map $\psi \in C^{\infty}(I\times M, M)$ such that $\psi(t,\cdot) \in \mbox{Diff}_0$ for all $t\in I$, and a family  $\{\Sigma_t\}_{t\in I}\in \Lambda$, we have $\{\psi(t,\cdot)(\Sigma_t)\}_{t \in I}\in \Lambda$. We require also  the existence of $N_0=N_0(\Lambda)>0$ such that the set $P$ has at most $N_0$ points for any $\{\Sigma_t\}_{t\in I}\in \Lambda$.

The {\it width of $M$ associated with $\Lambda$} is defined to be
$$W(M,\Lambda)=\inf_{\{\Sigma_t\}\in\Lambda}\sup_{t\in I}\H^2(\Sigma_t).$$

Now suppose that $\partial M \neq \emptyset$. We denote the mean curvature of the boundary by $H(\partial M)$. Here the convention is that the mean
curvature vector is $-H(\partial M) \nu$, where $\nu$ is the outward unit normal. 

In this case we choose $I=[0,1]$ and require the extra condition that any $\{\Sigma_t\}_{t \in [0,1]} \in \Lambda$ satisfies
\begin{enumerate}
\item[(c3)] $\Sigma_0 = \partial M$, $\Sigma_t \subset {\rm int}(M)$ for $t>0$, and $\{\Sigma_t\}$ foliates a neighborhood of $\partial M$.  This last condition  means that there exists a smooth function $w : [0, \varepsilon_0] \times \partial M \rightarrow \mathbb{R}$, satisfying $w(0,x)=0$ and $\frac{\partial w}{\partial t}(0,x)>0$, such that
$$
\Sigma_t = \{ \exp_x(-w(t,x)\nu(x)): x \in \partial M\}
$$
for any $t \in [0,\varepsilon_0]$.
\end{enumerate}

The goal of this section is to prove:

 \begin{thm}\label{boundary.min-max} Let $(M,g)$ be a compact three-manifold with boundary such that  $H(\partial M)>0$. For any saturated set   $\Lambda$ with 
$W(M,\Lambda)>|\partial M|,$
 there exists  a min-max sequence obtained from $\Lambda$ that converges in the varifold sense to an embedded minimal surface $\Sigma$ (possibly disconnected) contained in the interior of $M$. The area of $\Sigma$ is equal to $W(M,\Lambda)$, if counted with multiplicities.
 \end{thm}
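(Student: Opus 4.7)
The plan is to combine the Pitts--Simon--Smith--Colding--De~Lellis min-max regularity theory with a mean-convex barrier argument at $\partial M$. The role of $H(\partial M)>0$, together with the gap $W(M,\Lambda)>|\partial M|$, is to keep max-area slices of nearly optimal sweepouts at definite distance from $\partial M$, so that the standard pull-tight and regularity theory---which uses isotopies fixing a neighborhood of $\partial M$, as is built into the saturated class---operates in the interior.

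First I would establish a barrier near $\partial M$ from condition (c3). The first variation of area applied to $\Sigma_t=\{\exp_x(-w(t,x)\nu(x))\}$ gives
$$\frac{d}{dt}|\Sigma_t|\bigg|_{t=0} \;=\; -\int_{\partial M} H(\partial M)\,\partial_t w(0,\cdot)\,d\H^2 \;<\; 0,$$
so there exist $\varepsilon_1,\eta>0$ with $|\Sigma_t|\leq |\partial M|-\eta$ for $t\in[\varepsilon_1,2\varepsilon_1]$, and a tubular neighborhood $U$ of $\partial M$ such that any smooth surface contained in $U$ can be moved, through area-decreasing ambient isotopies supported in a slightly larger collar, to a surface in $M\setminus U$. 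Since $W(M,\Lambda)>|\partial M|$, after shrinking $U$ no slice contained in $U$ has area exceeding $\tfrac12(|\partial M|+W(M,\Lambda))$.

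Second, I would run the pull-tight of \cite{delellis-genus} on a minimizing sequence $\{\Sigma_t^n\}\in\Lambda$. Using saturation, there exist isotopies $\psi^n(t,\cdot)\in\mathrm{Diff}_0$ equal to the identity near $\partial M$ such that the deformed families $\{\psi^n(t,\cdot)(\Sigma_t^n)\}\in\Lambda$ have the property that every varifold subsequential limit of slices with area tending to $W(M,\Lambda)$ is stationary in $\mathrm{int}(M)$. By the barrier all such max-area slices lie in $M\setminus U$, so the deformation vector fields extracted from stationarity defects can be cut off near $\partial M$ without losing area monotonicity on the relevant slices. Pitts' combinatorial argument then produces a min-max sequence of slices that are almost minimizing in annuli disjoint from a fixed collar of $\partial M$ and converge in the varifold sense to a varifold $V$ with $\|V\|(M)=W(M,\Lambda)$ and $\mathrm{supp}(V)\subset\mathrm{int}(M)$.

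Finally, since $V$ is supported in the open manifold $\mathrm{int}(M)$, the local regularity results of Simon--Smith and Colding--De~Lellis apply verbatim and give that $V$ is a smooth closed embedded minimal surface $\Sigma\subset\mathrm{int}(M)$, possibly disconnected, with integer multiplicities whose mass sum is $W(M,\Lambda)$. The main obstacle is checking compatibility of the pull-tight and almost-minimizing constructions with the constraint that ambient deformations fix a neighborhood of $\partial M$: one must verify that the mean-convex barrier genuinely prevents any mass of the min-max varifold from accumulating on $\partial M$, and that cutting off the Colding--De~Lellis vector fields near $\partial M$ still yields area-reducing deformations on the slices that drive the min-max value. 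Once this is in place, the rest is a routine adaptation of the closed-ambient argument.
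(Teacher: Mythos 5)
Your overall strategy is the same as the paper's: use the mean convexity of $\partial M$ together with the gap $W(M,\Lambda)>|\partial M|$ to confine all slices of near-maximal area to a fixed compact subset of ${\rm int}(M)$, and then run the interior Colding--De Lellis machinery unchanged. The paper reduces the theorem to exactly this confinement statement and then invokes \cite{colding-delellis} with ``simple modifications'', so your pull-tight and regularity steps are in line with what is actually done.

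The genuine gap is in your barrier step, which is the entire technical content of the theorem and which you assert rather than prove. The claim that ``any smooth surface contained in $U$ can be moved, through area-decreasing ambient isotopies supported in a slightly larger collar, to a surface in $M\setminus U$'' is not the right statement and is false as written: the isotopies allowed in a saturated class must fix a neighborhood of $\partial M$, so a surface lying entirely inside that fixed neighborhood cannot be displaced at all; moreover the slices you actually need to move (those of near-maximal area) are not contained in $U$ but merely meet it. What must be proved is: for each sweepout there is $\eta>0$ such that all slices with $t\geq t_0/2$ lie in $M_{2\eta}$ (this uses the Hausdorff continuity (c2) and condition (c3), after discarding the initial range $[0,2\varepsilon]$ where areas are already $\leq |\partial M|+\delta< W(M,\Lambda)-\delta$), and there is a single smooth family of diffeomorphisms $F_t$, equal to the identity near $\partial M$ and for small $t$, which satisfies $|F_t(\Sigma_t)|\leq|\Sigma_t|$ and carries $\Sigma_t$ into $M_{a/2}$ for $t\geq t_0$. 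The paper does this in Lemma \ref{new.family} by flowing along $X=\kappa(r)\phi(r)\frac{\partial}{\partial r}$ in a collar $[0,2a]\times\partial M$ on which every parallel surface $C_r$ is mean convex, with $\phi'\leq -c\phi$ where $c$ bounds the second fundamental forms of the $C_r$; the key point is the divergence estimate $\mathrm{div}_L X\leq(\phi'+c\phi)\bigl\langle e_2,\frac{\partial}{\partial r}\bigr\rangle^2-\phi H\leq 0$, which is where $H>0$ actually enters, together with a time reparametrization ensuring the deformed family is again in $\Lambda$ by saturation. Without some such explicit construction, the confinement of near-maximal slices does not follow from mean convexity of $\partial M$ alone.
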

\begin{proof}

A {\it minimizing sequence} is a sequence of families $\{\Sigma_t^n\} \in \Lambda$ such that 
$$
\lim_{n \rightarrow \infty} \sup_{t\in I}\H^2(\Sigma^n_t)= W(M,\Lambda).
$$
A {\it min-max sequence} is then a sequence of slices  $\Sigma_{t_n}^n$, $t_n \in I$, such that $\mathcal{H}^2(\Sigma_{t_n}^n) \rightarrow W(M,\Lambda)$
as $n \rightarrow \infty$.

It is enough to show that we can find a minimizing sequence $\{\Sigma_t^n\} \in \Lambda$, $a>0$, and $\delta>0$ such that 
  $$
  |\Sigma_t^n| \geq  W(M,\Lambda)-\delta \Rightarrow d(\Sigma_t^n, \partial M) \geq a/2,
  $$
  because the theorem then follows from simple modifications of the arguments presented in \cite{colding-delellis}.

In a  neighborhood of $\partial M$, the metric can be written as $g=dr^2+g_{r}$ on  $[0,2a]\times \partial M$   for some $a>0$, where $\partial M$ is identified with $\{0\}\times\partial M$.  If  $H(\partial M)>0$, we can choose $a$ sufficiently small so that the mean curvature of $C_r=\{r\}\times \partial M$ is positive 
 for every $r \in [0,2a]$. We denote by $M_r$ the complement of $[0,r) \times \partial M$, and by $A$ the second fundamental form  of $C_r$.

\begin{lemm}\label{new.family}
 For any $\{\Sigma_t\}\in \Lambda$ and any $t_0 \in (0,1)$, there exists  a smooth one-parameter family of diffeomorphisms  $(F_t)_{0\leq t\leq 1}$  of $M$ so that
  \begin{itemize}
  \item $F_0 = {\rm id}$,
  \item $F_t={\rm id}$  in a  neighborhood $U$ of $\partial M$,
  \item $|F_t(\Sigma_t)| \leq |\Sigma_t|$,
  \item for any $t \geq t_0$,  we have $F_t(\Sigma_t) \subset M_{a/2}$.
  \end{itemize}
\end{lemm}

\begin{proof}
Let $\{\Sigma_t\}\in \Lambda$ and  $t_0 \in (0,1)$. Choose $\eta >0$ sufficiently small so that $\eta \leq a/8$ and $d(\Sigma_t, \partial M) \geq 2\eta$ for all $t \in [t_0/2,1]$.  

If  $c=\sup_{x\in C_r, 0\leq r\leq 2a}|A|$, we choose  a non-negative real function $\phi$ so that $\phi'\leq-c\phi$, $\phi(r)>0$ for $r<a$, and $\phi(r)=0$ for $r\geq a$. For instance, take $\alpha$ to be a nonnegative and non-increasing function with  $\alpha(r)>0$ for $r<a$, $\alpha(r)=0$ for $r\geq a$, and set $\phi(r)=\alpha(r)\exp(-cr)$. Now choose $\kappa$ a nonnegative function such that $\kappa(r)=0$ for $r \leq \eta$ and $\kappa(r)=1$ for $r \geq 2\eta$. 
	
	Denote by $(\tilde{F}_t)_{0\leq t<\infty}$ the one-parameter family of diffeomorphisms generated by the vector field $X=\kappa(r) \phi(r)\frac{\partial}{\partial r}$. 

\medskip
	
{\bf Claim.} For every surface $L\subset M_{2\eta}$, the function
$t \rightarrow \area(\tilde{F}_t(L))$ is non-increasing. In particular, $|\tilde F_t(L)| \leq |L|$ if $t \geq 0$.

\medskip
	
	We have
	$$\frac{d}{dt}\area(\tilde F_t(L))=\int_{\tilde F_t(L)}\d_{\tilde F_t(L)} Xd\mu.$$
Thus it suffices to check that for every orthonormal basis $\{e_1,e_2\}$ we have $\sum_{i=1}^2\langle \nabla_{e_i}X,e_i\rangle\leq 0$. Notice that $\kappa \equiv 1$ in $M_{2\eta}$ and $\tilde F_t(L)\subset M_{2\eta}$ for all $t\geq 0$. Without loss of generality we can assume that $e_1$ is orthogonal to $\frac{\partial}{\partial r}$. We denote by $e_1^*$ a unit vector tangent to $C_r$ and orthogonal to $e_1$. Direct computation shows that $\nabla_{\frac{\partial}{\partial r}}\frac{\partial}{\partial r}=0$ and thus, denoting by $\pi$ the projection of a tangent vector in $M$ into the tangent space at $C_r$, we get
\begin{eqnarray*}
	\sum_{i=1}^2\langle \nabla_{e_i}X,e_i\rangle&=&\phi'\left\langle e_2, \frac{\partial}{\partial r}\right\rangle^2+\phi\sum_{i=1}^2\left \langle \nabla_{e_i}\frac{\partial}{\partial r},e_i\right\rangle\\
	&=&\phi'\left\langle e_2, \frac{\partial}{\partial r}\right\rangle^2-\phi\sum_{i=1}^2A(\pi(e_i),\pi(e_i))\\
	&=& \Big(\phi' + A(e_1^*,e_1^*)\phi\Big)\left\langle e_2, \frac{\partial}{\partial r}\right\rangle^2-\phi H\\
	&\leq& (\phi'+c\phi)\left\langle e_2, \frac{\partial}{\partial r}\right\rangle^2-\phi H\\
	&\leq& 0.
\end{eqnarray*}
This proves the claim.

\medskip

Notice  that $\tilde{F}_t$ is the identity in $M_a$ and $\lim_{t\to\infty} \tilde{F}_t(r,x)=(a,x) $ for all $x\in \partial M$ and $2\eta \leq r<a$. Let $T>0$ be such that $\tilde F_T(C_{2\eta}) = C_{a/2}$. We choose a smooth nonnegative function $h:[0,1] \rightarrow \mathbb{R}$
such that  $h(t)=0$ for $t \leq t_0/2$ and $h(t)=T$ for $t \geq t_0$. 

Define $F_t = \tilde F_{h(t)}$. Hence $F_0=\tilde F_0 = {\rm id}$. The second item of the lemma follows because $X=0$ outside $M_\eta$. In order to prove the third item we recall that if $t \geq t_0/2$ then $\Sigma_t \subset M_{2\eta}$. Hence it follows from the claim that $|F_t(\Sigma_t)|\leq |\Sigma_t|$. If $t \leq t_0/2$ the inequality is trivial since we have that  $h(t)=0$ and $F_t= {\rm id}$. Finally, if $t \geq t_0$ we have $F_t=\tilde F_T$. In that case, since $\Sigma_t \subset M_{2\eta}$, we conclude that
$F_t(\Sigma_t) \subset \tilde F_T (M_{2\eta}) = M_{a/2}$. This finishes the proof of the fourth item and of the lemma.
\end{proof}

We can now finish the argument.
Let $m_0=W(M,\Lambda)$ and choose $0< \delta < \frac12(m_0 - |\partial M|)$.

Let $\{\Sigma_t\}\in \Lambda$. There exists $\varepsilon>0$ such that the map $\Psi: [0,2\varepsilon] \times \partial M \rightarrow M$ given by $\Psi(t,x) =  \exp_x(-w(t,x)\nu(x))$ is a diffeomorphism onto a neighborhood of $\partial M$. We can choose $\varepsilon$ sufficiently small so that $|\Sigma_t| \leq |\partial M| + \delta$ for $t \in [0,2\varepsilon]$. 
By choosing $t_0=\varepsilon$ in Lemma \ref{new.family}, we obtain a family of generalized surfaces $\{\Sigma_t'\}\in \Lambda$, given by $\Sigma_t' = F_t(\Sigma_t)$, such that
\begin{itemize}
  \item $\sup_{t\in [0,1]}\H^2(\Sigma_t') \leq \sup_{t\in [0,1]}\H^2(\Sigma_t)$,
   \item if $|\Sigma_t'| \geq m_0-\delta$, then $\Sigma_t' \subset M_{a/2}$.
  \end{itemize}
  
  This means that we can restrict to minimizing sequences $\{\Sigma_t^n\} \in \Lambda$ such that 
  $$
  |\Sigma_t^n| \geq m_0-\delta \Rightarrow d(\Sigma_t^n, \partial M) \geq a/2,
  $$
and this concludes the proof.
\end{proof}


\section{Genus and Index}\label{section.minimal.index}

 From this point on $(M,g)$ will denote a compact orientable Riemannian three-manifold without boundary.  Furthermore, the surfaces will be assumed to be connected and closed, unless otherwise indicated.
The genus of a surface $\Sigma' \subset M$ will be indicated by $g(\Sigma')$.  Let $\Sigma \subset M$ be an embedded minimal surface. If $\Sigma$ is   orientable, the  {\it Jacobi operator} is given by
$$L\phi=\Delta\phi +|A|^2\phi+\ric(\nu,\nu)\phi,$$
where $\phi \in C^{\infty}(\Sigma)$, $A$ is the second fundamental form, and $\nu$ is a unit normal vector. If $\Sigma$ is non-orientable, we need to pass to the double cover $\tilde{\Sigma}$ and restrict $L$ to the functions $\phi \in C^\infty(\tilde \Sigma)$ that satisfy $\phi \circ \tau = - \phi$, where $\tau:\tilde \Sigma \rightarrow \tilde \Sigma$ is the orientation-reversing involution such that $\Sigma=\tilde{\Sigma}/\{id,\tau\}$. The {\it index} of $\Sigma$, denoted by  $\mbox{ind}(\Sigma)$, is the number of negative eigenvalues of $L$, counted with multiplicity. The surface $\Sigma$ is called {\it stable} if ${\rm ind}(\Sigma) \geq 0$. 

The next result will be useful in proving that certain min-max minimal surfaces have index one. Throughout the rest of the paper, we will only consider saturated sets $\Lambda$ such that no sweepout $\{\Sigma_t\}$ in $\Lambda$ contains a non-orientable surface. 

\begin{prop}\label{sweepout.aux}
Let $\Lambda$ be  a saturated set of generalized families of surfaces, and 
let $(\Sigma_t)_{-1\leq t\leq 1} \in \Lambda$ be a sweepout such that
 \begin{itemize}
 \item[(a)] $|\Sigma_t|<|\Sigma_0|$ for all $t\neq 0$,
 \item[(b)] $(\Sigma_t)_{-1\leq t\leq 1}$ is smooth around $t=0$, 
 \item[(c)] the function $f(t)=|\Sigma_t|$ satisfies $f''(0)<0$.
 \end{itemize}
 If 
 $$|\Sigma_0| = W(M,\Lambda,g),$$ then $\Sigma_0$ is an embedded minimal surface of index one.
\end{prop}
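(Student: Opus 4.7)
The plan is to establish, in order, that $\Sigma_0$ is a smooth embedded surface, minimal, of index at least one, and of index at most one. The first point is immediate from hypothesis (b), which makes $\Sigma_0$ a smooth (embedded) surface by the sweepout definition; the real content is to prove minimality and then pin down the index exactly.

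To show $\Sigma_0$ is minimal, I would argue by contradiction. If the mean curvature vector $H_0$ is not identically zero, choose a smooth ambient vector field $Y$ with compact support near $\Sigma_0$ such that $-\int_{\Sigma_0}\langle Y, H_0\rangle\, d\mathcal{H}^2 < 0$, and let $F_s^Y$ be its flow. Smoothness of $(\Sigma_t)$ around $0$ propagates this first-order decrease to $\frac{d}{ds}|F_s^Y(\Sigma_t)|_{s=0} \leq -c < 0$ for $|t|\leq\delta$, while (a) and the continuity in (c1) yield a uniform gap $|\Sigma_t| \leq |\Sigma_0| - \eta$ for $|t|\geq\delta$. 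A bump function $\chi$ supported in $(-\delta,\delta)$ and equal to $s_0 > 0$ near $0$ produces the competitor $\{F^Y_{\chi(t)}(\Sigma_t)\}\in\Lambda$ (by saturation), and for $s_0$ small enough its maximum area is strictly below $|\Sigma_0|=W(M,\Lambda,g)$, contradicting the min-max identity.

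For the lower bound on the index, let $\phi$ be the normal component of $\partial_t \Sigma_t|_{t=0}$ on the minimal $\Sigma_0$. The second variation formula yields
$$f''(0) \;=\; Q(\phi,\phi) \;=\; \int_{\Sigma_0} \bigl(|\nabla \phi|^2 - (|A|^2 + \ric(\nu,\nu))\,\phi^2\bigr)\, d\mathcal{H}^2,$$
so hypothesis (c) furnishes an unstable direction and $\mathrm{ind}(\Sigma_0)\geq 1$.

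The main obstacle is ruling out $\mathrm{ind}(\Sigma_0)\geq 2$. Suppose this holds; then there is a two-dimensional subspace $V\subset C^\infty(\Sigma_0)$ on which $Q$ is negative definite. Note that $L\phi \not\equiv 0$, since otherwise $Q(\phi,\phi)=0$, contradicting (c). I would then select a nonzero $\phi_2\in V$ satisfying $\int_{\Sigma_0} \phi_2\, L\phi = 0$—a codimension-one condition inside the two-dimensional $V$—so that automatically $Q(\phi_2,\phi_2)<0$. Extend $\phi_2$ to a compactly supported ambient vector field $X$ on $M$ with $X\cdot\nu = \phi_2$ on $\Sigma_0$, and let $F_s$ be its flow. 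The two-parameter area function $G(t,s) := |F_s(\Sigma_t)|$ then satisfies at the origin: $G_t = G_s = 0$ (from $f'(0)=0$ and minimality), $G_{tt} = f''(0) < 0$, $G_{ss} = Q(\phi_2,\phi_2) < 0$, and—crucially—$G_{ts}=0$ by construction (the linearization of mean curvature at a minimal surface is the Jacobi operator, so the mixed derivative is proportional to $\int_{\Sigma_0} \phi_2\, L\phi$). A Taylor expansion then gives $\sup_t G(t,s) \leq |\Sigma_0| + \tfrac{1}{2}Q(\phi_2,\phi_2)s^2 + O(s^3) < |\Sigma_0|$ for small $s>0$, while uniform continuity of $G$ together with the bound $|\Sigma_t|\leq|\Sigma_0|-\eta$ for $|t|\geq\delta$ controls the tails. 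Hence $\{F_s(\Sigma_t)\}\in\Lambda$ would beat the width, a contradiction. The delicate point is precisely this orthogonality choice of $\phi_2$: without it, a nonzero mixed derivative could, for some small $t$, lift the area above $|\Sigma_0|$ and defeat the competitor, so one must decouple the two directions at second order to force the Hessian of $G$ at the origin to be strictly negative definite.
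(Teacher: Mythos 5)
Your proposal is correct and follows essentially the same route as the paper: rule out nonzero mean curvature by flowing along an ambient extension of $\vec H$ and producing a better competitor via saturation, get $\mathrm{ind}(\Sigma_0)\geq 1$ from (c), and rule out index $\geq 2$ by picking a negative direction $\phi_2$ in the two-dimensional unstable subspace with $\int_{\Sigma_0}\phi_2 L\phi\,d\mu=0$ so that the Hessian of the two-parameter area function is negative definite at the origin. The only cosmetic difference is that in the minimality step you localize the deformation in $t$ with a cutoff $\chi$, whereas the paper applies a fixed small flow time to all slices, using that $f(t,s)$ has a unique global maximum at the origin; both are valid.
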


\begin{proof}

We argue first that $\Sigma_0$ must be a minimal surface. If not, we can consider an ambient vector field $X$  vanishing outside a tubular neighborhood of $\Sigma_0$ and identical to the mean curvature vector  on $\Sigma_0$. Denote by $(F_s)_{s\in \R}$ the one parameter family of diffeomorphisms generated by $X$ and set $f(t,s)=|F_s(\Sigma_t)|$. We have $\frac{df}{dt}(0,0)=0$ and $\frac{df}{ds}(0,0)<0$. Because $f$ has a unique global maximum at the origin we can find $\delta$ small and positive so that $f(t,\delta)<f(0,0)$ for all $t$, which is a contradiction because  the sweepout $(F_{\delta}(\Sigma_t))_{-1\leq t\leq 1}$ is in $\Lambda$.

 It remains to show that $\Sigma_0$ has index one. Notice that the condition (c) implies ${\rm ind}(\Sigma_0) \geq 1$.

Choose  a unit normal vector field $\nu$, along $\Sigma_0$, and let $\phi_0 \in C^{\infty}(\Sigma_0)$ be such  that the deformation vector of $\Sigma_t$ when $t=0$ is $Z=\phi_0\nu$. If the index of  $\Sigma_0$ is bigger than one, we can choose  orthogonal eigenfunctions  $\phi_1,\phi_2\in C^{\infty}(\Sigma_0)$   for the Jacobi operator $L$ with negative eigenvalues. There exists a  linear combination of $\phi_1$ and $\phi_2$, say $\phi_3$, so that
\begin{equation}\label{orthogonal}
\int_{\Sigma_0}\phi_3 L \phi_0d\mu=0\mbox{ and }\phi_3\neq 0. 
\end{equation}
Consider the normal vector field $\tilde X=\phi_3\nu$  and extend it smoothly to be zero outside a small tubular neighborhood of $\Sigma_0$. Denote by $(\tilde F_s)_{s\in \R}$ the one parameter family of diffeomorphisms generated by $\tilde X$ and set $\tilde f(t,s)=|\tilde F_s(\Sigma_t)|$. We have $\nabla \tilde f(0,0)=0$ (by minimality), $\frac{\partial^2}{\partial s\partial t} \tilde f(0,0)=0$ (by \eqref{orthogonal}), $\frac{\partial^2}{\partial t\partial t} \tilde f(0,0)<0$ (by assumption), and $\frac{\partial^2}{\partial s\partial s} \tilde f(0,0)<0$ (by the choice of $\phi_3$).  From the Taylor expansion of $\tilde f$ around $(0,0)$   and the fact that  $\tilde f$ has a unique global maximum at the origin, we can find $\delta$ small and positive so that $\tilde f(t,\delta)<\tilde f(0,0)$ for all $t$, which is a contradiction.

\end{proof}

An orientable surface $\Sigma$ is a {\em Heegaard splitting} if $M\setminus\Sigma$ has two connected components that are both  handlebodies, i.e., diffeomorphic to a solid ball with handles attached. The {\it Heegaard genus} of $M$ is the lowest possible genus of a Heegaard splitting of $M$.

 Given an integer $h \geq 0$, we denote by $\mathcal{E}_h$ the collection of  all connected embedded minimal surfaces $\Sigma \subset M$ with $g(\Sigma)\leq h$.
It is said that  $(M,g)$ satisfies the $(\star)_h$ {\it -condition}  if 
\begin{itemize}
\item $M$ does not contain embedded non-orientable surfaces,
\item  no surface in $\mathcal{E}_h$ is stable.
\end{itemize}

\begin{rmk} If $(M,g)$ has positive Ricci curvature and  does not contain embedded non-orientable surfaces, then $M$ satisfies the $(\star)_h$-condition for all $h$. Lens spaces $L(p,q)$
with odd $p$ (see \cite{Bredon-Wood}) and the Poincar\'{e} homology sphere are some examples.
\end{rmk}

   \begin{lemm}\label{handlebody}
  Assume $(M,g)$ satisfies the $(\star)_h$ -condition. Then any surface $\Sigma \in \mathcal{E}_h$ is a Heegaard splitting.  \end{lemm}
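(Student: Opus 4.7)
The plan is to prove the lemma in two stages: first that $\Sigma$ separates $M$, and second that each component of $M\setminus\Sigma$ is a handlebody. The separation statement will use only the topological half of $(\star)_h$, while the handlebody statement will use the unstability of $\Sigma$ together with an isotopy-class minimization.

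For the separation statement, I will translate the hypothesis that $M$ admits no embedded non-orientable closed surface into $H_1(M;\Z/2)=0$. Indeed, any one-sided embedded surface in an orientable $3$-manifold admits a loop meeting it transversely in a single point, so its mod-$2$ fundamental class would be non-zero in $H_2(M;\Z/2)$, and Poincar\'e duality would then give a non-zero class in $H_1(M;\Z/2)$. From $H_1(M;\Z/2)=0$, universal coefficients plus Poincar\'e duality yield $H_2(M;\Z)=0$, so the closed orientable surface $\Sigma$ is null-homologous and must separate. Write $M\setminus\Sigma=M_1\sqcup M_2$ with $\partial M_i=\Sigma$.

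For the handlebody claim I fix one side, say $M_1$, and argue by contradiction, assuming $M_1$ is not a handlebody. Since $(\star)_h$ rules out stable surfaces in $\mathcal E_h$, the surface $\Sigma$ is unstable, so the Jacobi operator admits a positive first eigenfunction $\phi$ with eigenvalue $\mu_1<0$. Flowing $\Sigma$ slightly into $M_1$ along $\phi\nu$ (with $\nu$ the inward unit normal to $M_1$) produces, for small $t>0$, a smoothly embedded parallel surface $\Sigma_t\subset\mathrm{int}(M_1)$ with $|\Sigma_t|<|\Sigma|$ (via the negative second variation $\mu_1\int\phi^2<0$) and with mean curvature vector pointing back toward $\Sigma$; in particular $\Sigma_t$ is a mean-convex barrier from the thin collar side, preventing any minimizing sequence for the isotopy class of $\Sigma_t$ from leaking into that collar. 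I then invoke Meeks-Simon-Yau isotopy-class minimization for surfaces isotopic to $\Sigma_t$ inside $M_1$. Their structure theorem yields a dichotomy: either (i) the infimum is attained by a non-empty embedded stable minimal surface $\Sigma^*\subset\mathrm{int}(M_1)$ each of whose connected components has genus at most $g(\Sigma)\leq h$, or (ii) the minimization degenerates through a finite sequence of disk-compressions which ultimately shrink $\Sigma_t$ to a graph, thereby exhibiting $M_1$ as a regular neighborhood of that graph, i.e., a handlebody. In case (i), every component of $\Sigma^*$ is orientable (because $M$ admits no embedded non-orientable surface) and of genus $\leq h$, so some component belongs to $\mathcal E_h$ while being stable, contradicting $(\star)_h$. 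In case (ii), $M_1$ is a handlebody, contradicting the standing assumption. Either way we obtain a contradiction, so $M_1$---and by symmetry $M_2$---is a handlebody.

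The main obstacle will be the careful invocation of Meeks-Simon-Yau: I need to formulate the minimization in a framework where their hypotheses apply (for instance by doubling $M_1$ across its boundary, or by running the minimization on a slightly enlarged collar extension), to verify that the mean-convex barrier created by $\Sigma_t$ truly rules out spurious degeneration near $\Sigma$, and to identify the degenerate alternative (ii) with a complete handlebody decomposition rather than just a partial topological simplification.
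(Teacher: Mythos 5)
Your second step (each side of $\Sigma$ is a handlebody) is essentially the paper's argument: push $\Sigma$ into the component $N$ along the positive first eigenfunction of the Jacobi operator to create a mean-convex barrier, then run the Meeks--Simon--Yau isotopy minimization and use their characterization of handlebodies to get the dichotomy ``stable minimal surface of genus $\leq h$ (contradiction with $(\star)_h$) or collapse onto a spine (so $N$ is a handlebody).'' That part is sound.

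The separation step, however, contains a genuine error. You establish the implication ``a one-sided embedded surface exists $\Rightarrow H_1(M;\mathbb{Z}/2)\neq 0$'' and then invoke its \emph{converse}: from the absence of non-orientable surfaces you deduce $H_1(M;\mathbb{Z}/2)=0$. The converse is false. A nonzero class in $H_2(M;\mathbb{Z}/2)$ that is the mod-$2$ reduction of an integral class is represented by an orientable, two-sided, non-separating surface; only the classes tied to $2$-torsion in $H_1(M;\mathbb{Z})$ force non-orientable representatives. Concretely, $S^2\times S^1$ contains no embedded non-orientable closed surface (its first homology is torsion-free), yet $H_2(S^2\times S^1;\mathbb{Z})=\mathbb{Z}$ and $S^2\times\{pt\}$ is non-separating. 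So the topological half of $(\star)_h$ alone cannot yield separation, and your argument would ``prove'' that every closed orientable surface in such a manifold separates, which is wrong. The paper instead uses the analytic half of $(\star)_h$: if $\Sigma$ did not separate, flow it to \emph{both} sides along the positive first eigenfunction; the complement of the resulting thin slab around $\Sigma$ is a single connected region $A_t$ whose boundary $F_t(\Sigma)\cup F_{-t}(\Sigma)$ is mean convex towards $A_t$, and minimizing area in the isotopy class of one boundary component inside $A_t$ (Meeks--Simon--Yau) produces a stable embedded minimal surface of genus at most $h$, contradicting $(\star)_h$. You should replace your homological step by this barrier argument.
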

The version we state here was essentially proven in \cite{msy} (see also \cite{lawson}).
 \begin{proof}
 Let $\Sigma \in \mathcal{E}_h$. We   first argue that $\Sigma$ must separate. Because $M$ is orientable and $\Sigma$ is connected, $M\setminus\Sigma$ consists of one or two connected components. In the first case, we can choose $\phi\in C^{\infty}(\Sigma)$ an eigenfunction for the lowest eigenvalue $\lambda$ of the Jacobi operator $L$. Note that $\lambda <0$,  because  $\Sigma$ is unstable. Consider a vector field $X$ in  $M$ such that $X=\phi \nu$ on $\Sigma$, where $\nu$ is a unit normal vector to $\Sigma$, and denote by $(F_t)_{t\in \R}$ the flow generated by  $X$. Because $\phi$ can be taken to be strictly positive and $\Sigma$ does not separate we have that, for all $t$  sufficiently small,
$$M\setminus(F_{t}(\Sigma)\cup F_{-t}(\Sigma)) =A_t\cup B_t$$
 where $A_t, B_t$ are two disjoint and connected open regions  with $\Sigma \subset B_t$.
Moreover, 
 \begin{equation*}
 \frac{\partial}{\partial t}\langle \vec{H}(F_t(\Sigma)),\nu_t\rangle _{|t=0}=L\phi=-\lambda\phi>0,
\end{equation*}
where $\nu_t$ denotes the unit normal vector to $F_t(\Sigma)$. Hence we have that for sufficiently small $t$ the mean curvature vector of $\partial A_{t}$  points into $A_t$, i.e., $\partial A_t$ is mean convex.  
 Thus  we can minimize area in the isotopy class of one of the boundary components of $\partial A_t$, as in \cite{msy} (see also \cite{Hass-Scott}), to obtain an embedded stable minimal surface in $A_t$. The genus of this surface is at most $h$, contradicting the definition of the $(\star)_h$-condition. Therefore $M\setminus \Sigma$ is a union of two connected components.
  
 It remains to prove that each connected component  is a handlebody. Let $N$ be such a component.  If $N$ is not a handlebody, and since  $\Sigma=\partial N$ is an unstable minimal surface, we  can minimize area in its isotopy class  to obtain a stable minimal surface in the interior of $N$   with genus less than or equal to $h$ (see Proposition 1 of \cite{msy} for a characterization of handlebodies). This again violates the $(\star)_h$-condition, and finishes the proof of the lemma.
  \end{proof}

 \begin{lemm}\label{lemm.no.stable}  If $(M,g)$ satisfies the $(\star)_h$ -condition, then any $\Sigma \in \mathcal{E}_h$   must intersect every other embedded minimal surface.
 \end{lemm}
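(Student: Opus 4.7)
The plan is to argue by contradiction: suppose $\Sigma \cap \Sigma' = \emptyset$ for some other embedded minimal surface $\Sigma'$. By Lemma~\ref{handlebody}, $\Sigma$ is a Heegaard splitting, so $M \setminus \Sigma = N_1 \sqcup N_2$ with $N_1, N_2$ handlebodies. The $(\star)_h$-condition forces $\Sigma'$ to be orientable, and since it is connected and disjoint from $\Sigma$, it lies in one handlebody, say $N_1$. Because $H_2(N_1;\Z)=0$, $\Sigma'$ separates $N_1$, so I let $\Omega \subset N_1$ denote the component of $N_1 \setminus \Sigma'$ whose closure contains $\Sigma$, giving $\partial \Omega = \Sigma \sqcup \Sigma'$.

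Following the construction in Lemma~\ref{handlebody}, I would next exploit the instability of $\Sigma$ (which holds since $g(\Sigma) \leq h$). Choosing a positive first eigenfunction $\phi$ of the Jacobi operator (eigenvalue $\lambda<0$) and flowing by $X = \phi\nu$ with $\nu$ pointing into $N_1$, for $\epsilon>0$ small the surface $\Sigma_\epsilon := F_\epsilon(\Sigma)$ lies in $\Omega$, is isotopic to $\Sigma$, has $|\Sigma_\epsilon|<|\Sigma|$, and has mean curvature vector pointing deeper into $N_1$. Thus $\Sigma_\epsilon$ is \emph{strictly} mean convex with respect to the subregion $\Omega_\epsilon\subset\Omega$ bounded by $\Sigma_\epsilon\sqcup\Sigma'$, and $\Sigma'\subset\overline{\Omega_\epsilon}$. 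I would then apply the Meeks--Simon--Yau isotopy minimization \cite{msy} to $\Sigma_\epsilon$ inside $\overline{\Omega_\epsilon}$. Since $\Sigma_\epsilon$ is a boundary component of a region whose other boundary component $\Sigma'$ is nontrivial, $\Sigma_\epsilon$ is not null-isotopic in $\Omega_\epsilon$; the infimum of area in the isotopy class is therefore strictly positive, yielding a nontrivial embedded stable minimal surface $T\subset\overline{\Omega_\epsilon}$ whose components have genus $\leq g(\Sigma)\leq h$ by the MSY genus-reduction bound and are orientable by $(\star)_h$.

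Finally, the strict mean convexity of $\Sigma_\epsilon$ and the strong maximum principle rule out any contact between $T$ and $\Sigma_\epsilon$, while Hopf's maximum principle for two minimal surfaces tangent at an interior point forces any component of $T$ meeting $\Sigma'$ to coincide with $\Sigma'$. Hence for any component $T_0$ of $T$, either $T_0$ lies entirely in the interior of $\Omega_\epsilon$---so $T_0\in\mathcal{E}_h$ is stable, contradicting $(\star)_h$---or $T_0=\Sigma'$, in which case $\Sigma'$ is stable with $g(\Sigma')\leq h$, again contradicting $(\star)_h$. The main obstacle is executing the Meeks--Simon--Yau minimization in a region whose second boundary component is only weakly mean convex, and confirming the genus, stability, and nontriviality of the varifold limit; the maximum-principle analysis is then routine.
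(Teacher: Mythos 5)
Your proposal is correct and follows essentially the same route as the paper: use Lemma~\ref{handlebody} to place $\Sigma'$ in a handlebody bounded by $\Sigma$, pass to the region between the two surfaces, and apply the Meeks--Simon--Yau isotopy minimization to produce a stable minimal surface of genus at most $h$, contradicting the $(\star)_h$-condition. The only difference is that you spell out the steps the paper leaves implicit or delegates to \cite{msy} (pushing $\Sigma$ off itself by the first eigenfunction to get strict mean convexity, nontriviality of the limit because the region is not a handlebody, and the maximum-principle dichotomy at $\Sigma'$), which is a sound elaboration rather than a different argument.
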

 \begin{proof} 
 Suppose  that $\Sigma_1$ and $\Sigma_2$ are disjoint  embedded minimal surfaces, with $\Sigma_1 \in \mathcal{E}_h$.  Because $\Sigma_1$ is a Heegaard splitting,  there is a region $B$ of $M$, homeomorphic to a handlebody, such that $\Sigma_2\subset B$ and $\Sigma_1=\partial B$. Therefore there is a region $C$ of $M$ such that $\partial C=\Sigma_1 \cup \Sigma_2$. It follows from the results of \cite{msy} that we can minimize area in the isotopy class of $\Sigma_1$ to obtain an embedded stable minimal surface of genus less than or equal to $h$ in $C$. This is in contradiction with the assumption that $(M,g)$ satisfies the $(\star)_h$-condition. The lemma follows. 
  \end{proof}

 If $\Sigma$ is a Heegaard splitting of $M$, then there is a natural class of sweepouts $(\Sigma_t)_{-1\leq t\leq 1}$ we can associate to $\Sigma$.  Each $(\Sigma_t)_{-1\leq t\leq 1}$ satisfies
 \begin{itemize}
 \item $\Sigma_0=\Sigma$ and $\Sigma_t$ is an embedded surface isotopic to $\Sigma$ for all $-1<t<1$;
 \item  if $N_1, N_2$ denote the connected components of $M\setminus \Sigma$ then $(\Sigma_t)_{-1\leq t\leq 0}$ (or  $(\Sigma_t)_{0\leq t\leq 1}$)  foliate $N_1$ (or $N_2$) with $\Sigma_{-1}$ (or $\Sigma_{1}$) being a graph. 
 \end{itemize}
 This class generates a saturated set $\Lambda_\Sigma$ of generalized families of surfaces which we call  the {\it  saturated set associated with} $\Sigma$. If the genus of $\Sigma$ is $h$, the {\em large saturated set associated with} $\Sigma$, denoted by  $\Lambda^h$, is defined to be the union of all saturated sets associated with Heegaard splittings of genus $h$.

   \begin{thm}\label{heegaard.genus} Suppose $(M,g)$  satisfies the $(\star)_h$ -condition, where  $h$ is the Heegaard genus of $M$. There exists an   orientable embedded minimal surface $\Sigma_0$, of genus $h$, such that  $$|\Sigma_0| = \inf_{S \in \mathcal{E}_{h} }|S|=W(M,\Lambda_{\Sigma_0})=W(M,\Lambda^h).$$ 
   Moreover, $\Sigma_0$ has index one and  is contained in a sweepout
 $(\Sigma_t)_{-1\leq t\leq 1} \in \Lambda_{\Sigma_0}$  with
 \begin{itemize}
 \item[(a)] $|\Sigma_t|<|\Sigma_0|$ for all $t\neq 0$,
 \item[(b)] $(\Sigma_t)_{-1\leq t\leq 1}$ is smooth around $t=0$, 
 \item[(c)] the function $f(t)=|\Sigma_t|$ satisfies $f''(0)<0$.
 \end{itemize}
  \end{thm}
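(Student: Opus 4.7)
The plan proceeds in four steps: first, produce $\Sigma_0$ as an area minimizer in $\mathcal{E}_h$; second, build a sweepout through $\Sigma_0$ satisfying (a)--(c); third, identify the three widths; and fourth, apply Proposition \ref{sweepout.aux} to obtain the index bound. For existence I take a minimizing sequence $S_n \in \mathcal{E}_h$ with $|S_n| \to \inf_{S \in \mathcal{E}_h}|S|$; by Lemma \ref{handlebody} each $S_n$ is a Heegaard splitting of genus exactly $h$. Standard compactness for embedded minimal surfaces with uniform bounds on area and genus (Choi--Schoen/White) gives a subsequential varifold limit $V = \sum_i n_i \Sigma^{i}$. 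The $(\star)_h$-condition rules out pathological limits: a multiplicity $n_i \ge 2$ forces $\Sigma^{i}$ to be stable of genus $\le h$ (Simon-type Jacobi-field argument), contradicting $(\star)_h$, while two disjoint components would contradict Lemma \ref{lemm.no.stable}. Hence $V = \Sigma_0$ is a single multiplicity-one connected embedded minimal surface in $\mathcal{E}_h$ realizing $\inf_{\mathcal{E}_h}|S|$.

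For the sweepout, since $(\star)_h$ makes $\Sigma_0$ unstable, I take a first eigenfunction $\phi > 0$ of the Jacobi operator with eigenvalue $\lambda_0 < 0$ and unit normal $\nu$, and set $\Sigma_s = \{\exp_x(s\phi(x)\nu(x)) : x \in \Sigma_0\}$ for small $|s|$. Writing $f(s) = |\Sigma_s|$, the second variation formula gives $f'(0) = 0$ and $f''(0) = \lambda_0 \int_{\Sigma_0} \phi^2 < 0$, establishing (b), (c) and making $s = 0$ a local strict maximum. To extend to a full sweepout in $\Lambda_{\Sigma_0}$ I enter each handlebody $N_\pm$ separated by $\Sigma_0$, starting from the slice $\Sigma_{\pm \varepsilon/2}$ (already strictly below $|\Sigma_0|$ in area), and run the Meeks--Simon--Yau minimizing-isotopy procedure inside $N_\pm$. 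Any obstructing stable minimal surface would have genus $\le h$ and violate $(\star)_h$, so the isotopy reaches a graph spine with sup-area bounded by $|\Sigma_{\pm\varepsilon/2}| < |\Sigma_0|$; concatenation with the normal variation around $s = 0$ produces the desired $(\Sigma_t)_{-1 \le t \le 1} \in \Lambda_{\Sigma_0}$ satisfying (a).

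For the width identifications, the sweepout above immediately yields
\[
W(M,\Lambda^h) \le W(M,\Lambda_{\Sigma_0}) \le \sup_t |\Sigma_t| = |\Sigma_0| = \inf_{S \in \mathcal{E}_h}|S|,
\]
using $\Lambda_{\Sigma_0} \subset \Lambda^h$. For the reverse, I apply Simon--Smith/Pitts min-max to $\Lambda^h$ to produce an embedded minimal limit $\Gamma = \sum_i n_i \Gamma_i$ of total area $W(M,\Lambda^h)$. The De Lellis--Pellandini genus bound (orientable case) gives $g(\Gamma_i) \le h$; combined with the principle that multiplicity $\ge 2$ forces stability and with the $(\star)_h$-exclusion of stable genus-$\le h$ components, each $n_i = 1$ and $\Gamma_i \in \mathcal{E}_h$. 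Therefore $W(M,\Lambda^h) = |\Gamma| \ge |\Gamma_i| \ge |\Sigma_0|$, closing the chain. Feeding $|\Sigma_0| = W(M,\Lambda_{\Sigma_0})$ and properties (a)--(c) into Proposition \ref{sweepout.aux} then yields $\mathrm{ind}(\Sigma_0) = 1$.

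The main technical obstacle will be the second step: blending the explicit second-variation deformation near $\Sigma_0$ with a handlebody sweepout whose slices remain strictly below $|\Sigma_0|$ in area all the way to a graph spine, leveraging $(\star)_h$ to prevent the minimizing isotopy from being trapped at an obstructing stable minimal surface. A secondary subtlety is ensuring the compactness argument in the first step returns a connected, multiplicity-one limit; the $(\star)_h$-condition together with Lemma \ref{lemm.no.stable} is what makes this go through.
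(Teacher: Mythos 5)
Your overall strategy (minimize area in $\mathcal{E}_h$, build a sweepout through $\Sigma_0$ from the first eigenfunction of the Jacobi operator, identify the widths via the De Lellis--Pellandini genus bound, and conclude index one from Proposition \ref{sweepout.aux}) is the same as the paper's, but there is a genuine gap at precisely the step you flag as the main obstacle: producing a sweepout of each handlebody $N_\pm$ all of whose slices have area strictly below $|\Sigma_0|$. The Meeks--Simon--Yau procedure minimizes area in the isotopy class of $\Sigma_{\pm\varepsilon/2}$; since $N_\pm$ is a handlebody the infimum is zero and no stable obstruction arises, but this gives no control on the areas of the \emph{intermediate} surfaces of the isotopy, which may be far larger than $|\partial N_\pm|$. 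Excluding stable minimal surfaces via $(\star)_h$ governs where a minimizing sequence accumulates; it does not make the isotopy area-monotone. The true obstruction is the relative width $W(N,\tilde\Lambda)$ of the handlebody, and this is exactly why the paper proves Theorem \ref{boundary.min-max} (min-max for manifolds with mean-convex boundary): if $W(N,\tilde\Lambda)>|\partial N|$, that theorem yields an embedded minimal surface in the interior of $N$, hence disjoint from $\Sigma_0$, contradicting Lemma \ref{lemm.no.stable}; therefore $W(N,\tilde\Lambda)=|\partial N|<|\Sigma_0|$ and one may \emph{choose} a sweepout of $N$ with supremum of area below $|\Sigma_0|$. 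Note the contradiction comes from an (a priori unstable) min-max surface disjoint from $\Sigma_0$, not from a stable one, so $(\star)_h$ in the form you use it does not suffice. As written, your argument bypasses the one genuinely new tool of the paper's Section 2.

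Two smaller points. First, Step 1 presupposes $\mathcal{E}_h\neq\emptyset$; the paper establishes this at the outset (Pitts' existence theorem together with Theorem 0.6 of \cite{delellis-genus} applied to the saturated set of a minimal-genus Heegaard splitting, which simultaneously shows that the lowest genus of an embedded minimal surface equals $h$). You do eventually obtain a surface in $\mathcal{E}_h$ in Step 3, so this is an ordering defect, but as written your minimizing sequence has nothing to range over. Second, in Step 3 you invoke ``multiplicity $\geq 2$ forces stability'' for the \emph{min-max} limit $\Gamma=\sum_i n_i\Gamma_i$. For limits of sequences of minimal surfaces (your Step 1) the positive-Jacobi-field argument is standard, but for min-max varifold limits this is the multiplicity-one problem and cannot simply be asserted. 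Fortunately it is not needed: each component $\Gamma_i$ is orientable of genus at most $h$, hence lies in $\mathcal{E}_h$, so $W(M,\Lambda^h)=\sum_i n_i|\Gamma_i|\geq |\Gamma_1|\geq \inf_{S\in\mathcal{E}_h}|S|$ regardless of multiplicities, which is how the paper closes the chain.
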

  
\begin{proof}
From \cite{pitts} we know  that $(M,g)$ admits at least one  embedded minimal surface. Therefore  we can consider $h'$ to be the lowest possible genus of an embedded minimal surface in $M$.  Standard compactness of minimal surfaces  (for instance, Theorem 3 of \cite{white} or Theorem 4.2 of \cite{anderson}) implies  we can find a minimal surface $\Sigma_0$ of genus $h'$ which has smallest area among all embedded minimal surfaces with genus $h'$ . Hence
$$|\Sigma_0| = \inf_{S \in \mathcal{E}_{h'} }|S|.$$

We now argue that $h'=h$. Let $\Sigma$ be a Heegaard splitting of $M$ which has lowest possible genus $h$, and consider $\Lambda$ the saturated set associated with $\Sigma$. From  Theorem 0.6 of \cite{delellis-genus}  we know that $W(M,\Lambda,g)$ is achieved by a disjoint union of embedded minimal surfaces of genus at most $h$ with possible multiplicities  (there are no non-orientable minimal surfaces). Because $M$ satisfies the $(\star)_h$ -condition, there can be only one component $\Sigma^*$ by Lemma \ref{lemm.no.stable}. Note that $\Sigma^*$ must be  orientable. Hence    $h' \leq g(\Sigma^*) \leq h$. Since $g(\Sigma_0)=h'$, it follows from  Proposition \ref{handlebody} that $\Sigma_0$ is a Heegaard splitting. Therefore we also have $h\leq h'$. Hence $h=h'$.

Now:
 \begin{lemm}\label{prop.index} 
$$|\Sigma_0|=W(M,\Lambda_{\Sigma_0})=W(M,\Lambda^h).$$ Moreover, $\Sigma_0$ has index one and  is contained in a sweepout
 $(\Sigma_t)_{-1\leq t\leq 1} \in \Lambda_{\Sigma_0}$  such that
 \begin{itemize}
 \item[(a)] $|\Sigma_t|<|\Sigma_0|$ for all $t\neq 0$,
 \item[(b)] $(\Sigma_t)_{-1\leq t\leq 1}$ is smooth around $t=0$, 
 \item[(c)] the function $f(t)=|\Sigma_t|$ satisfies $f''(0)<0$.
 \end{itemize}
 \end{lemm}
 
  \begin{proof}
 From Lemma \ref{handlebody} we know that  $M\setminus \Sigma_0$ is the disjoint union of two handlebodies: $N_1$ and $N_2$.
 Set $\phi\in C^{\infty}(\Sigma_0)$ to be an eigenfunction for the lowest eigenvalue $\lambda<0$ of the Jacobi operator $L$,  and consider a vector field $X$ in  $M$ such that $X=\phi \nu$ on $\Sigma_0$, where $\nu$ is a unit normal vector pointing into $N_1$. Denote by $(F_t)_{t\in \R}$ the flow generated by $X$. Because $\phi$ can be taken to be strictly positive we have  
 \begin{equation}\label{nondegenerate}
\frac{\partial}{\partial t} \langle \vec H(\Sigma_t),\nu\rangle_{|t=0}=L\phi=-\lambda\phi>0.
\end{equation} Hence there exists $\varepsilon>0$ small enough so that
\begin{itemize}
 \item $\Sigma_t=F_t(\Sigma)$ is contained in $N_1$ (in $N_2$) for all $0<t<\varepsilon$  (for all $-\varepsilon<t<0$), 
 \item the mean curvature vector $H(\Sigma_t)$ points into $N_1$ (into $N_2$) for all $0<t<\varepsilon$  (for all $-\varepsilon<t<0$), 
 \item $|\Sigma_t|<|\Sigma|$ for all $0<|t|<\varepsilon$.
 \end{itemize}
 The surface $\Sigma_{\varepsilon}$ bounds a handlebody $N$ and so we can consider a sweepout 
 $\{\tilde{\Sigma}_t\}_{t \in [0,1]}$ of $N$ such that $\tilde{\Sigma}_t = \Sigma_{t+\varepsilon}$ for small $t$. Denote the corresponding set of saturated families of $N$ by $\tilde \Lambda$. If $W(N,\tilde \Lambda)>|\partial N|$ we can apply Theorem \ref{boundary.min-max} (because $H(\partial N)>0$) and derive the existence of a minimal surface $\Sigma_1$ in the interior of $N$, and thus disjoint from $\Sigma_0$. This contradicts  Lemma \ref{lemm.no.stable}.  Since  $|\partial N|<|\Sigma|$, we can find $\{\tilde \Sigma_t\} \in \tilde \Lambda$ so that
 $$\sup_{0\leq t\leq 1}|\tilde \Sigma_t|<|\Sigma|.$$
 Arguing in the same way for $\Sigma_{-\varepsilon}$ we conclude the existence of a sweepout $(\Sigma_t)_{-1\leq t\leq 1}$ of $M$ satisfying properties (a) and (b). Property (c) follows because from \eqref{nondegenerate} we obtain
 $$f''(0)=-\int_{\Sigma_0}\phi L\phi d\mu=\lambda\int_{\Sigma_0}\phi^2d\mu<0.$$

 Suppose that $W(M,\Lambda_{\Sigma_0},g)<|\Sigma_0|$. Then, by Theorem 0.6 of  \cite{delellis-genus}, there is an embedded minimal surface $S$ with $|S|<|\Sigma_0|$ and  $g(S)\leq g(\Sigma_0)$
  ($S$ has to be orientable).  This is a contradiction since  $\Sigma_0$ has least area in $\mathcal{E}_h$. Hence 
 $|\Sigma_0|=W(M,\Lambda_{\Sigma_0})$, and similarly we prove $|\Sigma_0|=W(M,\Lambda^h)$. The fact that $\Sigma_0$ has index one follows from  Proposition \ref{sweepout.aux}.
 \end{proof}
The statement of the theorem follows directly from the previous lemma.
\end{proof}

If  $M$  contains  a non-orientable embedded surface,   set $\tilde h$ to be the lowest genus of a non-orientable surface embedded in $M$, i.e., there is an embedding of $N_{\tilde h}$ (the non-orientable surface with genus $\tilde h$) into $M$ and every non-orientable embedded  surface in $M$ has genus greater than or equal to  $\tilde h$.

We denote by $\mathcal{F}$ the set of all embedded surfaces $\Sigma$ in $M$   homeomorphic to $N_{\tilde h}$. 
From Lemma 1 of  \cite{msy} we have that non-orientable  surfaces can not have arbitrarily small area. 

We define
$$\mathcal{A}(M,g)=\inf\{|S|\,|\, S\in \mathcal F\}.$$
 The next proposition follows from simple modifications of Proposition 5 in  \cite{bben}. We include the proof   for the
sake of completeness.

\begin{prop}\label{nonorientable} For every metric $g$ on $M$ there exists an embedded stable minimal surface $\Sigma\in \mathcal F$ with 
$|\Sigma|=\mathcal{A}(M,g)$.
\end{prop}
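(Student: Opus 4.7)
The plan, following Proposition 5 of \cite{bben}, is to construct $\Sigma$ as a subsequential varifold limit of an area-minimizing sequence in $\mathcal{F}$ via the Meeks-Simon-Yau isotopy minimization of \cite{msy}, and then to verify that the limit preserves the non-orientable topology of genus $\tilde{h}$.

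First, pick a minimizing sequence $\Sigma_n\in\mathcal{F}$ with $|\Sigma_n|\to \mathcal{A}(M,g)$; Lemma 1 of \cite{msy} gives a uniform positive lower bound on the area of any non-orientable embedded surface in $(M,g)$, so $\mathcal{A}(M,g)>0$. For each $n$, I would apply the Meeks-Simon-Yau machinery: perform $\gamma$-reductions along compressing disks and then minimize area inside the resulting isotopy class. This produces a modified minimizing sequence $\Sigma'_n$ that converges, after passing to a subsequence, in the varifold sense to a limit $V=\sum_i m_i S_i$, where each $S_i$ is a smooth embedded stable minimal surface and $\sum_i m_i |S_i|\le \mathcal{A}(M,g)$.

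The main obstacle is to show that exactly one $S_{i_0}$ lies in $\mathcal{F}$ and appears with multiplicity one. The argument tracks the one-sidedness of $\Sigma_n$ through the isotopy and through $\gamma$-reduction: cutting along an embedded compressing disk and capping off in a tubular neighborhood can only split off orientable pieces, so a one-sided (hence non-orientable) component must persist in the limit $V$; by the minimality of $\tilde{h}$, its genus is exactly $\tilde{h}$; and the area constraint $\sum_i m_i |S_i|\le \mathcal{A}(M,g)$, combined with the lower area bound from \cite{msy}, rules out any multiplicity or additional components. A technically cleaner variant, probably closer to the actual proof in \cite{bben}, is to work instead with the connected orientable boundary $\hat{\Sigma}_n$ of a small tubular neighborhood of $\Sigma_n$ (the orientation double cover, of area close to $2|\Sigma_n|$), apply MSY directly to the orientable $\hat{\Sigma}_n$, and then descend the double-covering varifold limit back to $M$ via the associated free involution to recover the desired non-orientable minimal surface. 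Stability of $\Sigma$ is then inherited from the MSY construction.
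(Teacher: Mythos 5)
Your main argument is essentially the paper's: take a minimizing sequence in $\mathcal{F}$ (which is automatically almost-minimizing in each of its isotopy classes since $\mathcal{J}(\Sigma_k)\subset\mathcal{F}$), apply Theorem 1 of \cite{msy} to get a varifold limit $\sum_j n_j\Sigma^{(j)}$ of total mass at most $\mathcal{A}(M,g)$, and then use the $\gamma_0$-reduction bookkeeping of \cite{msy} (Remark 3.17, via the tube surfaces and the vanishing-area piece) together with the minimality of $\tilde h$ and the positive lower area bound for non-orientable surfaces to exhibit one component $\Sigma^{(i)}\in\mathcal{F}$ with $|\Sigma^{(i)}|\leq\mathcal{A}(M,g)$. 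Two minor caveats: the paper neither needs nor establishes multiplicity one or the absence of other components (it only needs one component in $\mathcal{F}$, whose area is then forced to equal $\mathcal{A}(M,g)$), and your suggested ``cleaner variant'' via the orientation double cover is not what is done and is shakier---the double covers would typically collapse in the limit onto the non-orientable minimal surface with multiplicity two, and there is no free involution acting on the varifold limit to descend by---so the direct argument on the non-orientable surfaces is the right one.
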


\begin{proof}
   We can find a sequence of surfaces $\Sigma_k \in \mathcal{F}$ such that 
\[|\Sigma_k| \leq \mathcal{A}(M,g) + \varepsilon_k,\] 
where $\varepsilon_k \to 0$ as $k \to \infty$. This implies 
\[|\Sigma_k| \leq \inf_{\Sigma \in \mathcal{J}(\Sigma_k)} |\Sigma| + \varepsilon_k,\] 
where $\mathcal{J}(\Sigma_k)$ denotes the collection of all embedded surfaces isotopic to $\Sigma_k$. By 
Theorem 1 of \cite{msy}, a subsequence of the sequence $\Sigma_k$ converges weakly to a disjoint union of smooth embedded minimal surfaces $\Sigma^{(1)}, \hdots, \Sigma^{(R)}$ with positive integer multiplicities and,  in particular, we have 
\begin{equation} 
\label{area}
\sum_{j=1}^R n_j \, |\Sigma^{(j)}| \leq \mathcal{A}(M,g). 
\end{equation}
 We define surfaces $S_k^{(1)}, \hdots, S_k^{(R)}$ as follows: if $n_j = 2m_j$ is even, then $S_k^{(j)}$ is defined by 
\[S_k^{(j)} = \bigcup_{r=1}^{m_j} \Big \{ x \in M: d(x,\Sigma^{(j)}) = \frac{r}{k} \Big \}\] 
On the other hand, if $n_j = 2m_j + 1$ is odd, then $S_k^{(j)}$ is defined by 
\[S_k^{(j)} = \Sigma^{(j)} \cup \bigcup_{r=1}^{m_j} \Big \{ x \in M: d(x,\Sigma^{(j)}) = \frac{r}{k} \Big \}.\] 
By  Remark 3.17 of \cite{msy}, we can find embedded surfaces $S_k^{(0)}$ and $\tilde{\Sigma}_k$ with the following properties: 
\begin{itemize}
\item[(i)] The surface $S_k = \bigcup_{j=0}^R S_k^{(j)}$ is isotopic to $\tilde{\Sigma}_k$ if $k$ is sufficiently large.
\item[(ii)] The surface $\tilde{\Sigma}_k$ is obtained from $\Sigma_{q_k}$ by $\gamma_0$-reduction (cf. \cite[Section 3]{msy}). 
\item[(iii)] We have $S_k^{(0)} \cap \big ( \bigcup_{j=1}^R S_k^{(j)} \big ) = \emptyset$. Moreover, $|S_k^{(0)}| \to 0$ as $k \to \infty$.
\item[(iv)] $\mathrm{genus}(\tilde{\Sigma}_k)\leq \mathrm{genus}(\Sigma_{q_k})=\tilde h$ by \cite[Inequality (3.2)]{msy}.
\end{itemize}
By assumption, $\Sigma_{q_k}$ is homeomorphic to $N_{\tilde h}$, and $\tilde{\Sigma}_k$ is obtained from $\Sigma_{q_k}$ by $\gamma_0$-reduction. Consequently, one of the connected components of $\tilde{\Sigma}_k$ is an embedded non-orientable surface which, by (iv), must have genus less than or equal to   $\tilde h$  and thus  is homeomorphic to $N_{\tilde h}$. Hence, if $k$ is sufficiently large, then one of the connected components of $S_k$ is homeomorphic to $N_{\tilde h}$. Let us denote this connected component by $E_k$. Since $E_k \in \mathcal{F}$, we have $|E_k| \geq \mathcal{A}(M,g) > 0$. On the other hand, we have $|S_k^{(0)}| \to 0$ as $k \to \infty$. Putting these facts together, we conclude that $|E_k| > |S_k^{(0)}|$ if $k$ is sufficiently large. Hence, if $k$ is sufficiently large, then $E_k$ cannot be contained in $S_k^{(0)}$. Since $E_k \subset S_k$ is connected, it follows that $E_k$ is a connected component of $S_k^{(i)}$ for some integer $i \in \{1,\hdots,R\}$. Hence, $E_k$ is either homeomorphic to $\Sigma^{(i)}$ or to an oriented double cover of $\Sigma^{(i)}$. The last case cannot happen and thus  $\Sigma^{(i)} \in \mathcal{F}$. Moreover, it follows from (\ref{area}) that $|\Sigma^{(i)}| \leq \mathcal{A}(M,g)$. Hence, the surface $\Sigma^{(i)}$ is the desired minimizer.
\end{proof}

In order to state the next result, we adopt the following notation. If $M$ does not admit non-orientable embedded surfaces, we set $\tilde h$ to be the Heegaard genus $h$ of $M$. Otherwise we set $\tilde h$ to be the lowest possible genus among all non-orientable embedded surfaces in $M$, as above.
 
 \begin{cor}\label{consequence}
Let $(M,g)$ be a compact  orientable three-manifold. There  exists an embedded minimal surface $\Sigma \subset M$ with  $\mbox{ind}(\Sigma)\leq 1$ and $g(\Sigma)\leq \tilde h.$
\end{cor}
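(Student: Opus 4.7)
The plan is to treat two cases according to the definition of $\tilde h$, using the machinery already set up.

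First, suppose $M$ admits a non-orientable embedded surface, so that $\tilde h$ is the lowest genus of such a surface. Then Proposition \ref{nonorientable} directly produces an embedded stable minimal surface $\Sigma \in \mathcal{F}$ realizing the infimum $\mathcal{A}(M,g)$. Since $\Sigma$ is homeomorphic to $N_{\tilde h}$, we have $g(\Sigma) = \tilde h$, and stability gives $\text{ind}(\Sigma) = 0 \leq 1$. This case is essentially immediate.

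Second, suppose $M$ contains no non-orientable embedded surface, so $\tilde h = h$ is the Heegaard genus. I would split further depending on whether the $(\star)_h$-condition holds. If it does, Theorem \ref{heegaard.genus} directly gives an orientable embedded minimal surface $\Sigma_0$ of genus exactly $h = \tilde h$ and index one, and we are done. If instead the $(\star)_h$-condition fails, then (since the non-orientable clause of that condition holds by assumption) some surface in $\mathcal{E}_h$ must be stable. That is, there already exists an embedded minimal surface $\Sigma$ in $M$ with $g(\Sigma) \leq h = \tilde h$ and $\text{ind}(\Sigma) = 0$, which is exactly what we need.

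In all cases we produce an embedded minimal surface $\Sigma$ with $\text{ind}(\Sigma) \leq 1$ and $g(\Sigma) \leq \tilde h$. The main content is not really in this corollary itself but in checking that the three subcases exhaust all possibilities and that the definitions align; I do not expect any substantial obstacle, as both Proposition \ref{nonorientable} and Theorem \ref{heegaard.genus} are already established and the failure of the $(\star)_h$-condition in the remaining subcase immediately supplies a stable minimal surface of the right genus by definition.
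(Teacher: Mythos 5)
Your proposal is correct and follows essentially the same route as the paper: the non-orientable case is handled by Proposition \ref{nonorientable}, and in the orientable case one splits on whether the $(\star)_h$-condition holds, invoking Theorem \ref{heegaard.genus} when it does and extracting the stable surface in $\mathcal{E}_h$ furnished by its failure otherwise. No gaps.
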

\begin{proof}
If $M$ admits  non-orientable embedded surfaces the result follows from Proposition \ref{nonorientable}. Therefore  we can assume that every embedded surface of $M$ is orientable. If $M$ admits stable minimal surfaces of genus less than or equal to  $h$ the result follows immediately. The remaining case is when the ambient manifold satisfies the $(\star)_h$-condition, in which case the result follows from Theorem \ref{heegaard.genus}.
\end{proof}

 \begin{cor}
   Assume $M=S^3$. Then there exists an embedded minimal sphere $\Sigma$ in $M$ of index at most one.
   \end{cor}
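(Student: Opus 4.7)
The plan is to reduce this corollary to the immediately preceding Corollary \ref{consequence}, since essentially all the work has already been done. I first need to identify the value of $\tilde h$ for $M = S^3$. Because $S^3$ is simply connected, every closed embedded surface in $S^3$ is two-sided and hence orientable; in particular $S^3$ admits no non-orientable embedded surfaces. By the convention adopted just before Corollary \ref{consequence}, we therefore set $\tilde h = h$, the Heegaard genus of $S^3$. Since $S^3$ is the union of two round balls glued along their boundary, the equatorial $2$-sphere is a genus-zero Heegaard splitting, so $h = 0$ and thus $\tilde h = 0$.

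Applying Corollary \ref{consequence} to the orientable compact three-manifold $(S^3,g)$ then produces a connected embedded closed minimal surface $\Sigma \subset S^3$ with $\mathrm{ind}(\Sigma) \leq 1$ and $g(\Sigma) \leq \tilde h = 0$. A connected closed orientable surface of genus zero is a two-sphere, so $\Sigma$ is an embedded minimal sphere of index at most one, which is exactly the assertion of the corollary.

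There is no serious obstacle; the corollary is a bookkeeping consequence of the previous one. The only points worth stating explicitly in the write-up are (i) that $S^3$ contains no non-orientable closed embedded surfaces, so the definition of $\tilde h$ falls back to the Heegaard genus, and (ii) that the Heegaard genus of $S^3$ is zero. Internally, Corollary \ref{consequence} already handles the two scenarios that could arise: either $(S^3,g)$ admits a stable embedded minimal sphere (of index $0$, coming from the area-minimization arguments), or else the $(\star)_0$-condition holds and Theorem \ref{heegaard.genus} furnishes an index-one min-max minimal sphere of genus zero. In both cases the conclusion is an embedded minimal sphere of index at most one.
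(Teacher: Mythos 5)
Your proof is correct and follows exactly the route the paper intends: the corollary is stated without proof precisely because it is the specialization of Corollary \ref{consequence} to $M=S^3$, where $\tilde h = h = 0$ since $S^3$ admits no non-orientable embedded surfaces and has Heegaard genus zero. Your explicit verification of these two facts, and of the dichotomy (stable sphere of index $0$ versus the $(\star)_0$-condition and Theorem \ref{heegaard.genus}), matches the paper's argument.
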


\begin{rmk}
A consequence of Pitts' work \cite{pitts} is that every $3$-manifold admits an embedded minimal surface. Corollary \ref{consequence} gives some extra geometric information on the minimal surface. In \cite{pitts-rubin}, the authors  claimed, without a proof, an index  and genus estimate from which Corollary \ref{consequence} would follow. A related genus estimate was recently proven in \cite{delellis-genus}.
\end{rmk}

\section{Ricci flow and rigidity results}\label{ricci.flow}

 Let $\Lambda$ be  a saturated set of generalized families of surfaces.

We consider $g(t)$, $t \in [0,T)$, a smooth solution to Ricci flow 
$$\frac{\partial }{\partial t}g(t)=-2\mathrm{Ric}(g(t))$$
with $g(0)=g$. 

 \begin{lemm}\label{lemm.lip}
The function $t \mapsto W(M, \Lambda, g(t))$ is Lipschitz continuous.
\end{lemm}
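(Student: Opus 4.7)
The plan is to exploit the fact that on any compact sub-interval $[0,T'] \subset [0,T)$ the Ricci tensor is uniformly bounded, which makes the metrics $g(t)$ mutually comparable with an exponential factor, hence makes surface areas comparable. Crucially, the saturated set $\Lambda$ is defined purely in terms of smooth diffeomorphisms and topological properties of the slices, so it does not depend on the metric; the only thing that changes with $t$ is how we measure $\mathcal{H}^2$.

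More precisely, on $[0,T']$ I would pick a constant $C>0$ such that $-C\,g(t) \leq \mathrm{Ric}(g(t)) \leq C\,g(t)$ as symmetric $2$-tensors. Since $\partial_t g = -2\,\mathrm{Ric}$, this yields the pointwise bound
\begin{equation*}
e^{-2C|t_2-t_1|}\, g(t_1) \leq g(t_2) \leq e^{2C|t_2-t_1|}\, g(t_1)
\end{equation*}
for all $t_1,t_2 \in [0,T']$. Applying this to the restriction of $g(t)$ to the tangent planes of any $2$-rectifiable set, and using that the area element is the square root of the $2\times 2$ Gram determinant, one obtains
\begin{equation*}
e^{-2C|t_2-t_1|}\,\mathcal{H}^2_{g(t_1)}(\Sigma) \leq \mathcal{H}^2_{g(t_2)}(\Sigma) \leq e^{2C|t_2-t_1|}\,\mathcal{H}^2_{g(t_1)}(\Sigma)
\end{equation*}
for every $\Sigma$ with finite area.

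Taking the supremum over $t \in I$ along a fixed sweepout $\{\Sigma_t\}\in \Lambda$ and then the infimum over $\Lambda$ (the same family $\Lambda$ for both metrics, since $\Lambda$ is metric-independent), I get
\begin{equation*}
e^{-2C|t_2-t_1|}\,W(M,\Lambda,g(t_1)) \leq W(M,\Lambda,g(t_2)) \leq e^{2C|t_2-t_1|}\,W(M,\Lambda,g(t_1)).
\end{equation*}
Since $W(M,\Lambda,g(t))$ is bounded above on $[0,T']$ (by, say, $W(M,\Lambda,g(0))\,e^{2CT'}$), the elementary inequality $e^x-1\leq x\,e^{T'}$ for $|x|\leq 2CT'$ converts these two-sided exponential bounds into a Lipschitz estimate $|W(g(t_2))-W(g(t_1))|\leq K|t_2-t_1|$ on $[0,T']$, with $K$ depending only on $T'$ and the curvature bound. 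Since $T'<T$ is arbitrary, this proves Lipschitz continuity.

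There is no real obstacle; the only point requiring care is simply the observation that $\Lambda$ is purely a combinatorial/topological object and therefore the same for all $g(t)$, so that one may compare widths by comparing areas of the same sweepouts. Everything else reduces to the elementary fact that the Ricci flow equation, together with boundedness of $\mathrm{Ric}$, gives uniform equivalence of the metrics on compact time intervals.
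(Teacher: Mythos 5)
Your argument is correct and is essentially identical to the paper's proof: both use the uniform Ricci bound on compact time intervals to get the two-sided comparison $e^{-2C|t_1-t_2|}W(M,\Lambda,g(t_1))\leq W(M,\Lambda,g(t_2))\leq e^{2C|t_1-t_2|}W(M,\Lambda,g(t_1))$ by comparing areas of the same sweepouts, and then deduce the Lipschitz estimate. The only difference is cosmetic: you spell out the final elementary conversion from the exponential bounds to a Lipschitz constant, which the paper leaves implicit.
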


\begin{proof}
Let $t_0 \in [0,T)$, and choose $C > 0$ such that $\sup_M |\text{\rm Ric}(g(t))|_{g(t)} \leq C$ for all $t \in [0,t_0]$. Hence
\[e^{-2C |t_1 - t_2|} \, g(t_1) \leq g(t_2) \leq e^{2C |t_1 - t_2|} \, g(t_1)\] 
for all  $t_1,t_2 \in [0,t_0]$.

Given $\delta>0$, let  $(\Sigma_s)_{s \in [-1,1]}$ denote a sweepout  such that 
$$\sup_{s\in [-1,1] }\H^2_{g(t_1)}(\Sigma_s) \leq W(M,\Lambda, g(t_1)) +\delta.$$

Now,
\begin{eqnarray*} 
W(M,\Lambda, g(t_2))\leq \sup_{s \in [-1,1]}\H^2_{g(t_2)}(\Sigma_s) &\leq& e^{2C |t_1 - t_2|}\sup_{s\in[-1,1]}\H^2_{g(t_1)}(\Sigma_s)\\
&\leq& e^{2C |t_1 - t_2|}\Big(W(M,\Lambda, g_{t_1})+\delta)\Big).
\end{eqnarray*}

Letting $\delta \rightarrow 0$, and reversing the roles of $t_1$ and $t_2$, we obtain 
$$ e^{-2C |t_1 - t_2|}W(M,\Lambda, g(t_1))\leq W(M,\Lambda, g(t_2))\leq e^{2C |t_1 - t_2|}W(M,\Lambda, g(t_1))$$
for all $t_1,t_2 \in [0,t_0]$. The result follows.
\end{proof}

\begin{prop}\label{width.inequality.1}
  Let $h$ be the Heegaard genus of $M$ and assume $(M,g(t))$ satisfies the $(\star)_h$ -condition for all $0 \leq t < T'$, with $T' \leq T$.  Then
$$W(M,\Lambda^h, g(t))\geq  W(M,\Lambda^h,g)-\Big(16\pi-8\pi\Big[\frac{h}{2}\Big]\Big) t$$
for all $0\leq t<T'$.
\end{prop}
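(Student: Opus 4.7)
By Lemma \ref{lemm.lip} the function $t\mapsto W(M,\Lambda^h,g(t))$ is locally Lipschitz, hence differentiable almost everywhere. The linear lower bound therefore follows by integration once we establish the pointwise Dini inequality
$$\limsup_{\tau\to 0^+}\frac{W(M,\Lambda^h,g(t_0))-W(M,\Lambda^h,g(t_0+\tau))}{\tau}\ \leq\ 16\pi-8\pi[h/2]$$
at every $t_0\in [0,T')$. My plan is an envelope-type comparison: apply Theorem \ref{heegaard.genus} at $g(t_0+\tau)$ to produce an index-one embedded minimal surface $\Sigma^\tau\subset (M,g(t_0+\tau))$ of genus exactly $h$, lying as the unique non-degenerate maximum of an optimal sweepout $\{\Sigma_s^\tau\}\in\Lambda^h$ with $\sup_s|\Sigma_s^\tau|_{g(t_0+\tau)}=W(M,\Lambda^h,g(t_0+\tau))$. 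Using this sweepout as a test family at $t_0$ and the elementary bound $\sup A-\sup B\leq \sup(A-B)$,
$$W(M,\Lambda^h,g(t_0))-W(M,\Lambda^h,g(t_0+\tau))\ \leq\ \sup_s\big(|\Sigma_s^\tau|_{g(t_0)}-|\Sigma_s^\tau|_{g(t_0+\tau)}\big).$$

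Each area difference is controlled by the first variation under Ricci flow,
$$|\Sigma|_{g(t_0)}-|\Sigma|_{g(t_0+\tau)}\ =\ \int_{t_0}^{t_0+\tau}\!\!\int_\Sigma \operatorname{tr}_\Sigma\!\ric(g(r))\,d\mu_{g(r)}\,dr,$$
and the central pointwise identity $\operatorname{tr}_\Sigma\!\ric=\tfrac{R}{2}+K_M(T\Sigma)$ (derived from $R=2K_M(T\Sigma)+2\ric(\nu,\nu)$), together with the Gauss equation $K_\Sigma=K_M(T\Sigma)+\det A$ and Gauss--Bonnet $\int K_\Sigma=2\pi\chi(\Sigma)$, rewrites the integrand as $\int R/2+2\pi\chi(\Sigma)-\int\det A$. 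As $\tau\to 0^+$, compactness of minimal surfaces and the non-degeneracy of the max provided by Theorem \ref{heegaard.genus} concentrate the supremum on the limit slice $\Sigma_0$ — the $g(t_0)$-minimal surface achieving $W(M,\Lambda^h,g(t_0))$ — at which $H=0$ forces $\det A=-|A|^2/2$. Hence
$$\limsup_{\tau\to 0^+}\frac{W(M,\Lambda^h,g(t_0))-W(M,\Lambda^h,g(t_0+\tau))}{\tau}\ \leq\ \int_{\Sigma_0}\Big(\frac{R}{2}+K_{\Sigma_0}+\frac{|A|^2}{2}\Big)d\mu.$$

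The final task is to bound the right-hand side by $16\pi-8\pi[h/2]$. Gauss--Bonnet contributes $\int K_{\Sigma_0}=2\pi\chi(\Sigma_0)=4\pi(1-h)$ (since $\Sigma_0$ has genus $h$ by Theorem \ref{heegaard.genus}), while the $R$ and $|A|^2$ terms are handled by the index-one second-variation inequality: writing $\int_\Sigma(R/2+|A|^2/2)=\int(|A|^2+\ric(\nu,\nu))+2\pi\chi(\Sigma_0)$ and using $\int(|A|^2+\ric(\nu,\nu))=-Q(1)\leq -\lambda_1|\Sigma_0|$ from the Rayleigh characterization of the lowest Jacobi eigenvalue, combined with a sharp bound on $|\lambda_1||\Sigma_0|$ for index-one minimal surfaces in $R\geq 6$ (analogous to the Schoen--Yau stability argument), produces the desired universal estimate. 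The equality case is the round $S^3$ equator (genus $0$, $h=0$), where $\int R/2=12\pi$, $|A|^2=0$, $\chi=2$ sum to $16\pi$, consistent with the explicit flow $W(t)=4\pi e^{-4t}$; the correction $-8\pi[h/2]$ reflects the Gauss--Bonnet deficit $4\pi h$ attenuated by the paired-handles structure of genus-$h$ Heegaard splittings. The main technical obstacle is making the envelope step rigorous — i.e., showing that the optimal sweepouts $\{\Sigma_s^\tau\}$ converge suitably as $\tau\to 0^+$ so that the supremum concentrates on $\Sigma_0$ — and executing the index-one second-variation calculation sharply enough to extract the exact constant $16\pi-8\pi[h/2]$.
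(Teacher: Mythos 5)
Your overall scheme (Lipschitz continuity plus a one-sided derivative bound for the width, obtained by testing against the optimal sweepout of Theorem \ref{heegaard.genus} and computing $\frac{d}{dt}|\Sigma|_{g(t)}=-\int_\Sigma(R-\ric(\nu,\nu))\,d\mu$ via the Gauss equation) is the paper's strategy, and your pointwise identities are correct. But there are two genuine gaps. First, the constant $16\pi-8\pi[h/2]$ comes from the bound $\int_{\Sigma_0}(\ric(\nu,\nu)+|A|^2)\,d\mu\le 8\pi\big(\big[\frac{h+1}{2}\big]+1\big)$ for an orientable index-one minimal surface of genus $h$ (Proposition \ref{prop.index.estimate}, part (iii)), which is proved by the Hersch trick: a conformal map $\phi:\Sigma_0\to S^2$ of controlled degree balanced against the low eigenfunctions. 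Your substitute --- $\int(\ric(\nu,\nu)+|A|^2)\le-\lambda_1|\Sigma_0|$ together with ``a sharp bound on $|\lambda_1|\,|\Sigma_0|$ for index-one surfaces in $R\ge6$'' --- does not work: no such bound on $|\lambda_1|\,|\Sigma_0|$ is available by a stability-type argument, and, more importantly, Proposition \ref{width.inequality.1} assumes nothing about the scalar curvature, so any step invoking $R\ge 6$ uses a hypothesis you do not have. The estimate you need is topological/conformal and curvature-free, and without it the exact constant is not obtained.

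Second, the ``envelope'' step is not a deferrable technicality. You extract optimal sweepouts $\{\Sigma^\tau_s\}$ at every time $t_0+\tau$ and need their maximal slices to converge, with uniform nondegeneracy, to a single minimal surface at $t_0$; this requires a compactness statement for min-max surfaces along the flow that you have not supplied. Moreover $\sup_s\big(|\Sigma^\tau_s|_{g(t_0)}-|\Sigma^\tau_s|_{g(t_0+\tau)}\big)$ is, away from the maximal slice, only controlled by $O\big(\tau\cdot\sup_s|\Sigma^\tau_s|\big)$, so the claimed concentration at $\Sigma_0$ must be argued, not asserted. The paper circumvents all of this with a first-crossing-time contradiction: assuming the linear bound fails, let $t'$ be the first time the width crosses a slightly steeper comparison line, take the \emph{single} optimal sweepout at $t'$, and use the unique nondegenerate maximum (properties (a)--(c) of Theorem \ref{heegaard.genus}) together with a uniform Lipschitz bound away from $s=0$ to show that $W(M,\Lambda^h,g(t))$ for $t$ slightly less than $t'$ violates the definition of $t'$. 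Only one sweepout at one time is ever needed. To repair your argument, replace the limiting/envelope step with this contradiction scheme and replace the eigenvalue bound with Proposition \ref{prop.index.estimate}, part (iii).
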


 \begin{proof}
 Suppose the assertion is false. This means that there exists  $\tau \in (0,T')$ such that 
$$ W(M, \Lambda^h,g(\tau)) < W(M,\Lambda^h, g) - \Big(16\pi-8\pi\Big[\frac{h}{2}\Big]\Big) \tau.$$

Let $\varepsilon > 0$ be such that 
$$ W(M,\Lambda^h,g(\tau)) < W(M, \Lambda^h,g) - \Big(16\pi-8\pi\Big[\frac{h}{2}\Big]\Big) \tau-2\varepsilon\tau,$$
and define 
\begin{eqnarray*}
t' = \inf \Big\{ t \in [0,T'): W(M,\Lambda^h,g(t)) &<&  W(M, \Lambda^h,g)\\
&&- \left(16\pi-8\pi\Big[\frac{h}{2}\Big] + \varepsilon \right) t - \varepsilon \tau \Big\}.
\end{eqnarray*} 

Clearly, $t' \in (0,\tau)$. Moreover, we have 
\begin{equation}\label{width.ineq.1}
 W(M, \Lambda^h,g(t')) - W(M, \Lambda^h,g(t)) \leq -\left(16\pi-8\pi\Big[\frac{h}{2}\Big]+ \varepsilon\right) \, (t' - t)
 \end{equation}
for all $t \in [0,t')$.  

Since  $(M,g(t'))$  satisfies the $(\star)_h$ -condition,  we can choose  $(\Sigma_s)_{s\in [-1,1]}$  as the sweepout given by Theorem \ref{heegaard.genus}, with $$|\Sigma_0|=W(M,\Lambda^h,g(t')),$$   and set $f(s,t)=|\Sigma_s|_{g(t)}$.

A standard computation using the Gauss equation  shows that
\begin{align*} 
\frac{\partial f}{\partial t}(0,t') = \frac{d}{dt} |\Sigma_0|_{g(t)}(t') & = -\int_{\Sigma_0} \Big(R - \text{\rm Ric}(\nu,\nu)\Big)\,d\mu\\
& = -4\pi\chi(\Sigma_0) - \int_{\Sigma_0}  \Big(\text{\rm Ric}(\nu,\nu)+|A|^2\Big)\,d\mu, 
\end{align*} 
where all  geometric quantities are computed with respect  to $g(t')$. 

From  Proposition \ref{prop.index.estimate},  part  (iii), we obtain
\begin{eqnarray*}
\frac{\partial f}{\partial t}(0,t') &\geq& 8\pi(h-1)-8\pi\Big( \Big[\frac{h+1}{2}\Big]+1\Big)\\
&=& -16\pi +8\pi\Big[\frac{h}{2}\Big].
\end{eqnarray*} Since  $f$ is smooth in a neighborhood of $(0,t')$,  this implies 
\begin{eqnarray*}
 f(s,t) &\leq& f(s,t') -\left(16\pi -8\pi\Big[\frac{h}{2}\Big]+ \frac{\varepsilon}{2}\right) (t-t')\\
 &\leq& W(M,\Lambda^h,g(t'))-\left(16\pi -8\pi\Big[\frac{h}{2}\Big]+ \frac{\varepsilon}{2}\right) (t-t')
\end{eqnarray*}
for all $(s,t)$ close to $(0,t')$ with $t\leq t'$. Since $s \rightarrow f(s,t')$ has a unique maximum point at $s=0$, we conclude by continuity that
$$
\sup_{s\in [-1,1]} f(s,t) \leq W(M,\Lambda^h,g(t'))-\left(16\pi -8\pi\Big[\frac{h}{2}\Big]+ \frac{\varepsilon}{2}\right) (t-t')
$$
for all $t$ sufficiently close to $t'$. This gives 
$$
W(M,\Lambda^h,g(t)) \leq W(M,\Lambda^h,g(t'))-\left(16\pi -8\pi\Big[\frac{h}{2}\Big] + \frac{\varepsilon}{2}\right) (t-t')
$$
for such $t$, which is in contradiction with inequality (\ref{width.ineq.1}). This finishes the proof of the proposition.
\end{proof}

 \begin{cor}\label{width.inequality.2}
 Suppose $(M,g)$ has positive Ricci curvature, and $M$ contains no non-orientable embedded  surface. Let $h$ be the Heegaard genus of $M$. Then
$$W(M,\Lambda^h, g(t))\geq  W(M,\Lambda^h,g)-\Big(16\pi-8\pi\Big[\frac{h}{2}\Big]\Big) t$$
for all $0\leq t<T$.
\end{cor}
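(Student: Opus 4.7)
The plan is to reduce the corollary directly to Proposition \ref{width.inequality.1} by showing that the hypothesis of that proposition holds along the entire Ricci flow, with $T' = T$. Once this reduction is in place, there is nothing new to estimate.

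The key observation is that the two ingredients of the $(\star)_h$-condition can be propagated along the flow. The absence of non-orientable embedded surfaces is a purely topological condition on $M$ and is therefore preserved (it does not see the metric at all). The other half, the nonexistence of stable embedded minimal surfaces in $\mathcal{E}_h$, is guaranteed by the remark following the definition of the $(\star)_h$-condition: positive Ricci curvature plus absence of non-orientable embedded surfaces implies $(\star)_h$ for every $h$. So it suffices to check that positive Ricci curvature is preserved along the Ricci flow.

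For this, I would invoke Hamilton's maximum principle for tensors applied to the Ricci tensor on a three-manifold (see \cite{hamilton}): on a closed $3$-manifold, positive Ricci curvature is preserved under Ricci flow, so that $\text{Ric}(g(t)) > 0$ for all $t \in [0,T)$. Combining this with the preservation of the topological hypothesis, we conclude that $(M, g(t))$ satisfies the $(\star)_h$-condition for all $h$, and in particular for the Heegaard genus of $M$, for every $t \in [0,T)$.

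Having verified the hypothesis of Proposition \ref{width.inequality.1} with the choice $T' = T$, the stated inequality
$$W(M,\Lambda^h, g(t))\geq  W(M,\Lambda^h,g)-\Big(16\pi-8\pi\Big[\tfrac{h}{2}\Big]\Big) t$$
for all $0 \leq t < T$ is an immediate consequence. The only step that is non-routine is the appeal to Hamilton's preservation of positive Ricci curvature, but this is a standard and well-documented fact on three-manifolds, so no real obstacle arises here.
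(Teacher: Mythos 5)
Your proof is correct and follows essentially the same route as the paper: invoke Hamilton's theorem that positive Ricci curvature is preserved under Ricci flow on closed three-manifolds, note that the topological hypothesis persists, conclude that $(M,g(t))$ satisfies the $(\star)_h$-condition for all $t\in[0,T)$, and apply Proposition \ref{width.inequality.1} with $T'=T$. No issues.
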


 \begin{proof}
 Since positive Ricci curvature is preserved by Ricci flow in dimension three (\cite{hamilton}), we have that $(M,g(t))$ contains no stable embedded minimal surface for all $0\leq t<T$ and thus must satisfy the $(\star)_h$ -condition. The corollary follows immediately from Proposition \ref{width.inequality.1}.
 \end{proof}  
 
 We denote the scalar curvature of $(M,g)$ by $R$.

\begin{thm} \label{thm.pos.ricci} Suppose $(M,g)$ has positive Ricci curvature, and $M$ contains no non-orientable embedded surface. Let $h$ be the Heegaard genus of $M$.  If  $R\geq 6$, then $$W(M,\Lambda^h, g)\leq 4\pi-2\pi\left[\frac{h}{2}\right] \leq 4\pi.$$
Moreover, $W(M,\Lambda^h, g)= 4\pi$ if and only if $g$  has constant sectional curvature one and $M=S^3$. 
\end{thm}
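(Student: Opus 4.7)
The plan is to deform $g$ along Hamilton's Ricci flow $g(t)$, $t\in[0,T)$, and to combine the linear lower bound on $W(M,\Lambda^h,g(t))$ from Corollary \ref{width.inequality.2} with an upper bound on $T$ and the vanishing of the width at the singular time. Since positive Ricci curvature is preserved in dimension three, the flow exists on a maximal interval $[0,T)$ and $(M,g(t))$ shrinks smoothly to a round point as $t\to T$. To bound $T$, I would apply the maximum principle to $\partial_t R=\Delta R + 2|\ric|^2$, using the Cauchy--Schwarz bound $|\ric|^2\geq R^2/3$ that holds in dimension three, to obtain
$$\frac{dR_{\min}}{dt}\geq \frac{2}{3}R_{\min}^2.$$
Integrating this ODE with initial value $R_{\min}(0)\geq 6$ forces $T\leq 1/4$.

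Next I would show $W(M,\Lambda^h,g(t))\to 0$ as $t\to T$. Writing $g(t)=V(t)^{2/3}\tilde g(t)$, where $\tilde g(t)$ denotes the volume-normalized flow and $V(t)=\vol(M,g(t))$, Hamilton's convergence theorem says that $\tilde g(t)$ converges smoothly to a metric of constant positive sectional curvature, while $V(t)\to 0$. Consequently, for any fixed sweepout $(\Sigma_s)\in\Lambda^h$ we have $\sup_s \H^2_{g(t)}(\Sigma_s)=V(t)^{2/3}\sup_s \H^2_{\tilde g(t)}(\Sigma_s)\to 0$. Combining with Corollary \ref{width.inequality.2} and letting $t\to T$ yields
$$W(M,\Lambda^h,g)\leq \left(16\pi - 8\pi\left[\frac{h}{2}\right]\right)T \leq 4\pi - 2\pi\left[\frac{h}{2}\right]\leq 4\pi.$$

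For the rigidity, assume $W(M,\Lambda^h,g)=4\pi$; then each inequality above must be an equality, so $[h/2]=0$ (that is, $h\leq 1$), $T=1/4$, $R_{\min}(0)=6$, and $|\ric|^2\equiv R^2/3$ throughout the flow. The last condition makes each $g(t)$ Einstein, and positive-Ricci Einstein metrics in dimension three have constant positive sectional curvature; in particular $g$ itself has constant sectional curvature one, so $M=S^3/\Gamma$ for some finite $\Gamma\subset SO(4)$ acting freely. If $\Gamma$ were nontrivial then $M$ would be a lens space $L(p,q)$ with $p\geq 3$ odd (to respect the hypothesis that $M$ contains no embedded non-orientable surface), and the $\Z_p$-invariant Clifford tori in $S^3$ would descend to a Heegaard sweepout of $L(p,q)$ of maximum area $2\pi^2/p<4\pi$, contradicting $W(M,\Lambda^h,g)=4\pi$. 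Hence $M=S^3$. The converse is immediate from the equatorial sweepout of round $S^3$, which achieves supremum $4\pi$. The main technical obstacle I foresee is the vanishing step $W(M,\Lambda^h,g(t))\to 0$, which genuinely requires Hamilton's smooth convergence theorem for positive-Ricci three-manifolds rather than a soft Bishop--Gromov argument.
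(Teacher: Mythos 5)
Your argument is correct and follows the paper's overall strategy (Ricci flow, the linear lower bound from Corollary \ref{width.inequality.2}, the bound $T\leq 1/4$ from the scalar curvature ODE, and the saturation-of-inequalities rigidity argument), but you handle the key vanishing step $W(M,\Lambda^h,g(t))\to 0$ as $t\to T$ by a genuinely different route. The paper does not invoke Hamilton's full convergence theorem for this: it uses the fact that $(M,g(t))$ satisfies the $(\star)_h$-condition along the flow, so by Theorem \ref{heegaard.genus} the width is realized by an index-one minimal surface, and then the Hersch-type estimate of Proposition \ref{prop.index.estimate}(iii) gives $\min_M R(g(t))\cdot W(M,\Lambda^h,g(t))\leq 24\pi+16\pi(h/2-[h/2])$; combined with $\min_M R(g(t))\to\infty$ (Hamilton, Theorem 15.1) this forces the width to zero. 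Your argument instead tests the width against a single fixed sweepout, whose supremal area scales like $V(t)^{2/3}$ under the volume-normalized convergence to a round metric; this is softer in that it needs no min-max realization theorem at times $t>0$, but it leans on the stronger convergence statement rather than just the blow-up of $R_{\min}$. The two are equally valid here. In the equality case your treatment is also essentially the paper's; just note two small points of care: the deduction that $|\ric|^2\equiv R^2/3$ requires the strong maximum principle applied to the evolution of $R$ (equality $R_{\min}(t)=6/(1-4t)$ alone is a statement about the minimum, and one must still kill the $2|\mathring{\ric}|^2$ term everywhere), and the reduction to lens spaces uses the Heegaard genus bound $h\leq 1$ (a spherical space form with nontrivial $\Gamma$ need not be a lens space in general), which you do have available from $[h/2]=0$. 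Your exclusion of $L(p,q)$ via the sweepout of tori equidistant from the projected Clifford torus, giving $W\leq 2\pi^2/p<4\pi$ directly, is a slight simplification of the paper's version, which instead contradicts the minimality of the width-realizing torus in $\mathcal{E}_1$.
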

\begin{proof}
 Let $(g(t))_{0\leq t<T}$ denote a maximal solution of Ricci flow, with $g(0)=g$.  From Corollary \ref{width.inequality.2} we obtain 
 \begin{equation}\label{width.equation}
 W(M,\Lambda^h,g(t))\geq W(M,\Lambda^h,g(0))-\left(16\pi-8\pi\left[\frac{h}{2}\right]\right) t.
 \end{equation}

We now argue that
\begin{equation}\label{zero}
\lim_{t\to T} W(M,\Lambda^h,g(t))=0.
\end{equation}
This follows because, from Proposition \ref{prop.index.estimate}, part  (iii), and Theorem \ref{heegaard.genus} ($(M,g)$ satisfies the $(\star)_h$-condition), we have
$$\min_{M}R(g(t))W(M,\Lambda^h,g(t))\leq 24\pi+16\pi\left(\frac{h}{2}-\left[\frac{h}{2}\right]\right),$$ 
 and according to Theorem 15.1 of \cite{hamilton} we have
$$\lim_{t\to T} \min_{M}R(g(t))=+\infty.$$
Combining \eqref{zero} with \eqref{width.equation} we obtain
\begin{equation}\label{1/4}
W(M,\Lambda^h,g)\leq \left(16\pi-8\pi\left[\frac{h}{2}\right]\right)T.
\end{equation}

 From the evolution equation of the scalar curvature:
$$\frac{\partial}{\partial t} R(g_t) = \Delta R(g_t) + \frac{2}{3} \, R(g_t)^2 + 2 \, |\mathring {\mathrm{Ric}}|^2,$$
we get
$$\frac{\partial}{\partial t} R(g_t) \geq \Delta R(g_t) + \frac{2}{3} \, R(g_t)^2.$$
Here $\mathring {\mathrm{Ric}} = \mathrm{Ric}-Rg/3$.
If $\min_{M}R(g(t_1))=k_1$, the Maximum Principle tells us that
\begin{equation}\label{R}
\min_M R(g(t))\geq \frac{3k_1}{3-2k_1(t-t_1)}\quad\mbox {for all  } t_1\leq t<T.
\end{equation}
Using the inequality above with $t_1=0$ and $k_1=6$ we obtain
\begin{equation}\label{lower.r} \min_{M}R(g(t))\geq \frac{6}{1-4t}.
\end{equation}
Hence  $T\leq 1/4$ and it follows from inequality \eqref{1/4}  that $$W(M,\Lambda^h,g) \leq 4\pi-2\pi\left[\frac{h}{2}\right].$$

If $W(M,\Lambda^h,g)=4\pi$, then  we must have $T=1/4$ and $h=0$ or $1$. We  first show that  $g$ must be Einstein, therefore of constant curvature.  The expression on the right hand side of \eqref{R} must be finite for all $t_1\leq t<1/4$,  hence
$3-2k_1(t-t_1)>0$ for all $t_1\leq t<1/4 $ and so
 $$ \min_{M}R(g(t_1))\leq \frac{6}{1-4t_1}$$
 for all $0\leq t_1<1/4$. This implies   equality  in \eqref{lower.r} and the Maximum Principle implies that  $g$  must be Einstein.
 
 To conclude that $M=S^3$ it is enough to show that $h=0$ because in that case $M$ contains a minimal embedded sphere and  Frankel's Theorem \cite{frankel} implies $M$ is simply connected. If $h=1$ then $M$ contains a minimal embedded torus $T$ which realizes the width and so, by Theorem \ref{heegaard.genus},  any other embedded minimal  torus must have area bigger than $|T|=4\pi$. It is a classical fact that manifolds  with Heegaard genus one are either Lens spaces $L(p,q)$ or $S^2\times S^1$ (see \cite[Section 8.3.4]{stillwell} or \cite[Theorem 1.6]{sav}). Thus $M$ contains a flat torus of area  $2\pi^2/p<4\pi$ (projection of Clifford torus) which is a contradiction. 
\end{proof}

If $M$ contains   non-orientable embedded surfaces, we can consider the invariant  $\mathcal{A}(M,g)$ defined in  Section \ref{section.minimal.index}.

\begin{lemm}\label{lemm.lip2}
The function $f(t)=\mathcal{A}(M,g(t))$ is Lipschitz continuous.
\end{lemm}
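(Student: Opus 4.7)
The plan is to mirror the proof of Lemma \ref{lemm.lip} verbatim, replacing sweepouts by individual surfaces in $\mathcal F$. The key observation is that $\mathcal A(M,g(t))$ is defined as an infimum of areas of surfaces in a class that does not depend on the metric, so the bi-Lipschitz comparison of metrics coming from bounded Ricci curvature gives directly a multiplicative comparison of $\mathcal A$.

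More precisely, fix $t_0\in[0,T)$ and choose $C>0$ with $\sup_M|\mathrm{Ric}(g(t))|_{g(t)}\leq C$ for all $t\in[0,t_0]$. Since $\partial_t g(t)=-2\,\mathrm{Ric}(g(t))$, integrating gives the bi-Lipschitz comparison
\[
e^{-2C|t_1-t_2|}\,g(t_1)\leq g(t_2)\leq e^{2C|t_1-t_2|}\,g(t_1)
\]
for all $t_1,t_2\in[0,t_0]$. For any fixed embedded surface $S\in\mathcal F$, this inequality on the ambient metric yields
\[
e^{-2C|t_1-t_2|}\,|S|_{g(t_1)}\leq |S|_{g(t_2)}\leq e^{2C|t_1-t_2|}\,|S|_{g(t_1)}.
\]

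Given $\delta>0$, pick $S\in\mathcal F$ with $|S|_{g(t_1)}\leq \mathcal A(M,g(t_1))+\delta$. Then
\[
\mathcal A(M,g(t_2))\leq |S|_{g(t_2)}\leq e^{2C|t_1-t_2|}\bigl(\mathcal A(M,g(t_1))+\delta\bigr).
\]
Letting $\delta\to 0$ and then exchanging the roles of $t_1$ and $t_2$ gives
\[
e^{-2C|t_1-t_2|}\,\mathcal A(M,g(t_1))\leq \mathcal A(M,g(t_2))\leq e^{2C|t_1-t_2|}\,\mathcal A(M,g(t_1)).
\]
Since $\mathcal A(M,g(t))$ is uniformly bounded on $[0,t_0]$ (e.g.\ by $e^{2Ct_0}\mathcal A(M,g(0))$), the elementary inequality $|e^x-1|\leq |x|e^{|x|}$ converts the multiplicative bound into a Lipschitz bound on $[0,t_0]$, and since $t_0<T$ was arbitrary we get local Lipschitz continuity on $[0,T)$.

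There is essentially no obstacle: the only point to notice is that, unlike the width, $\mathcal A$ is defined by optimizing over a topologically fixed class $\mathcal F$ of embedded surfaces that is independent of the metric, so one does not even need to move surfaces by diffeomorphisms—the same surface works for both metrics and the bi-Lipschitz comparison of $g(t_1)$ and $g(t_2)$ immediately transfers to areas. The whole argument is formally identical to Lemma \ref{lemm.lip}, with the saturated family replaced by $\mathcal F$.
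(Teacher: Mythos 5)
Your proof is correct and is exactly the argument the paper intends: the paper's proof of Lemma \ref{lemm.lip2} simply says it is analogous to Lemma \ref{lemm.lip}, and you have carried out that analogy, correctly noting that the class $\mathcal{F}$ is metric-independent so the bi-Lipschitz comparison of metrics transfers directly to the infimum of areas. No issues.
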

\begin{proof}
The proof is analogous to the proof of  Lemma \ref{lemm.lip}.
\end{proof}
\begin{prop}\label{prop.no.orientable}
For all $0\leq t< T$ we have
$$\mathcal{A}(M,g(t))\geq  \mathcal{A}(M,g)-8\pi t.$$
\end{prop}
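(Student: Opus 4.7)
The plan is to mimic the structure of Proposition \ref{width.inequality.1}.

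Arguing by contradiction, suppose there exists $\tau \in (0,T)$ with $\mathcal{A}(M,g(\tau)) < \mathcal{A}(M,g)-8\pi\tau$, and pick $\varepsilon>0$ small enough that $\mathcal{A}(M,g(\tau)) < \mathcal{A}(M,g)-8\pi\tau-2\varepsilon\tau$. Define
$$t' = \inf\{t\in[0,T): \mathcal{A}(M,g(t))<\mathcal{A}(M,g)-(8\pi+\varepsilon)t-\varepsilon\tau\}.$$
Lemma \ref{lemm.lip2} yields $t'\in(0,\tau)$ and $\mathcal{A}(M,g(t'))-\mathcal{A}(M,g(t))\leq -(8\pi+\varepsilon)(t'-t)$ for every $t\in[0,t')$.

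At time $t'$, Proposition \ref{nonorientable} gives an embedded stable non-orientable minimal surface $\Sigma\in\mathcal{F}$ with $|\Sigma|_{g(t')}=\mathcal{A}(M,g(t'))$. Viewing $\Sigma$ as a fixed subset of $M$ and evolving its area under Ricci flow, the same Gauss-equation plus Gauss--Bonnet computation used in Proposition \ref{width.inequality.1} yields
$$\frac{d}{ds}|\Sigma|_{g(s)}\Big|_{s=t'} = -4\pi\chi(\Sigma) - \int_\Sigma \big(|A|^2+\ric(\nu,\nu)\big)\,d\mu.$$

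The heart of the argument, and what I expect to be the main obstacle, is to show
$$4\pi\chi(\Sigma)+\int_\Sigma \big(|A|^2+\ric(\nu,\nu)\big)\,d\mu \leq 8\pi,$$
so that the derivative above is bounded below by $-8\pi$. This exploits stability via an equivariant Hersch-type trick: let $\tilde\Sigma$ denote the oriented double cover of $\Sigma$ with deck involution $\sigma$; stability asserts that the second variation of area is non-negative when tested against any $\sigma$-anti-invariant function on $\tilde\Sigma$. In the principal case $\Sigma \cong \mathbb{RP}^2$, I would pick a conformal diffeomorphism $F\colon \tilde\Sigma \to (S^2,g_{\mathrm{std}})$ intertwining $\sigma$ with the antipodal map, and use $\tilde\phi_i = x_i\circ F$ for $i=1,2,3$ as anti-invariant test functions. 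Summing the stability inequality over $i$ and invoking $\sum_i \tilde\phi_i^2 = 1$ together with the conformal invariance of the Dirichlet energy in dimension two produces $\int_\Sigma (|A|^2+\ric(\nu,\nu))\,d\mu \leq 4\pi$, and combining with $\chi(\mathbb{RP}^2)=1$ gives the claimed bound with equality. The case of higher non-orientable genus is handled analogously using a low-degree equivariant conformal branched cover $\tilde \Sigma \to S^2$, together with the fact that $\chi(\Sigma) \leq 1$ for every non-orientable surface.

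Once this derivative estimate is in hand, the contradiction closes as in Proposition \ref{width.inequality.1}: for $t<t'$ sufficiently close to $t'$ we have $|\Sigma|_{g(t)} \leq \mathcal{A}(M,g(t'))+(8\pi+\varepsilon/2)(t'-t)$; since $\Sigma\in\mathcal{F}$ at every time, $\mathcal{A}(M,g(t)) \leq |\Sigma|_{g(t)}$, contradicting $\mathcal{A}(M,g(t)) \geq \mathcal{A}(M,g(t'))+(8\pi+\varepsilon)(t'-t)$ forced by the definition of $t'$.
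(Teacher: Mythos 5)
Your proposal is correct and follows essentially the same route as the paper: the paper establishes the derivative bound $\frac{d}{dt}|\Sigma|_{g(t)}\geq -8\pi$ at a minimizer via its Proposition \ref{prop.index.estimate}, part (ii) (the stability estimate $\int_\Sigma \ric(\nu,\nu)+|A|^2\,d\mu\leq 4\pi(g(\tilde\Sigma)+1)$, cited from Ross), and then closes the contradiction exactly as in Proposition \ref{width.inequality.1}. Your equivariant Hersch-trick sketch is precisely the content of that cited estimate, so the two arguments coincide.
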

\begin{proof}
Suppose that $\Sigma \in \mathcal{F}$ is such that $|\Sigma|_{g(t_0)}=\mathcal{A}(M,g(t_0))$.  From Proposition \ref{prop.index.estimate}, part  (ii), we obtain
\begin{eqnarray*} 
\frac{d}{dt} |\Sigma|_{g(t)}(t_0)   &=& -\int_{\Sigma} \Big(R - \text{\rm Ric}(\nu,\nu)\Big)\,d\mu \\
&=& -2\int_{\Sigma}K\,d\mu - \int_{\Sigma} \Big( \text{\rm Ric}(\nu,\nu)+|A|^2\Big)\,d\mu\\
&=&-2\pi\chi(\tilde\Sigma)-\int_{\Sigma}\Big(  \text{\rm Ric}(\nu,\nu)+|A|^2\Big)\,d\mu\\
 &\geq& 4\pi(g(\tilde \Sigma)-1)-4\pi(g(\tilde \Sigma)+1)\\
 &=&-8\pi.
\end{eqnarray*} 
Using this calculation and Proposition \ref{nonorientable}, we can argue exactly like in Proposition \ref{width.inequality.1} or in \cite[Proposition 10]{bben} to achieve the desired result.
\end{proof}

\begin{thm}\label{A.inequality} Assume $(M,g)$ has positive Ricci curvature and contains  embedded non-orientable surfaces.  If $R\geq 6$, then $\mathcal{A}(M, g)\leq 2\pi.$ 
\end{thm}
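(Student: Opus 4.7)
The plan is to mirror the proof of Theorem~\ref{thm.pos.ricci}: run Hamilton's Ricci flow on $(M,g)$, combine the monotonicity of $\mathcal{A}$ along the flow (Proposition~\ref{prop.no.orientable}) with an upper bound on the singular time coming from $R\geq 6$, and show that $\mathcal{A}$ collapses to zero as $t\to T$. Let $(g(t))_{0\leq t<T}$ be the maximal Ricci flow with $g(0)=g$. By Hamilton \cite{hamilton}, positive Ricci curvature is preserved in dimension three, so the family $\mathcal{F}$ (defined topologically) is unaffected and $\mathcal{A}(M,g(t))$ is defined and realized by an embedded stable minimal surface on all of $[0,T)$.

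First I would apply Proposition~\ref{prop.no.orientable} to obtain
\begin{equation*}
\mathcal{A}(M,g)\leq \mathcal{A}(M,g(t))+8\pi t \qquad(0\leq t<T).
\end{equation*}
Next, the scalar curvature evolution argument from Theorem~\ref{thm.pos.ricci} (the Maximum Principle applied to $\partial_t R\geq \Delta R+\tfrac{2}{3}R^2$ starting from $R\geq 6$) gives $\min_M R(g(t))\geq 6/(1-4t)$, and hence $T\leq 1/4$.

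The essential step is to show $\lim_{t\to T}\mathcal{A}(M,g(t))=0$; this is where I expect the main work. The simplest route is to fix \emph{any} surface $\Sigma\in\mathcal{F}$, bound $\mathcal{A}(M,g(t))\leq |\Sigma|_{g(t)}$, and then invoke Hamilton's convergence theorem: under positive Ricci curvature in dimension three the unnormalized flow contracts $M$ to a point as $t\to T$, so $g(t)\to 0$ as a bilinear form and $|\Sigma|_{g(t)}\to 0$. An alternative, parallel to the treatment of $W$ in Theorem~\ref{thm.pos.ricci}, is to establish the uniform bound $\min_M R(g(t))\cdot\mathcal{A}(M,g(t))\leq C(\tilde h)$ by inserting a realizer $\Sigma_t$ into the stability inequality on its oriented double cover (the very calculation already carried out in the proof of Proposition~\ref{prop.no.orientable}), and then use $\min_M R(g(t))\to\infty$ from Theorem~15.1 of~\cite{hamilton}.

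Letting $t\to T^{-}$ in the monotonicity inequality then yields
\begin{equation*}
\mathcal{A}(M,g)\leq 8\pi T\leq 2\pi,
\end{equation*}
which is the claimed sharp bound. Every ingredient but the vanishing of $\mathcal{A}$ at the singular time is a transcription of the argument used for the width; once that vanishing is in hand, the constant $2\pi$ emerges automatically from the slope $8\pi$ in Proposition~\ref{prop.no.orientable} together with $T\leq 1/4$.
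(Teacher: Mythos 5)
Your proposal is correct and follows essentially the same route as the paper: monotonicity of $\mathcal{A}$ from Proposition \ref{prop.no.orientable}, the bound $T\leq 1/4$ from the scalar curvature evolution, and the vanishing of $\mathcal{A}(M,g(t))$ as $t\to T$ via $\min_M R(g(t))\cdot \mathcal{A}(M,g(t))\leq C$ (Proposition \ref{prop.index.estimate}, part (ii)) together with $\min_M R(g(t))\to\infty$. Your alternative argument for the vanishing step, using the fact that the unnormalized flow contracts $M$ to a point so that any fixed $\Sigma\in\mathcal{F}$ has $|\Sigma|_{g(t)}\to 0$, is also valid but invokes more of Hamilton's convergence theorem than the paper needs at this point.
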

\begin{proof}
Let $(g(t))_{0\leq t<T}$ denote a maximal solution of Ricci flow with $g(0)=g$. From Proposition \ref{prop.no.orientable},
$$\mathcal{A}(M, g(t))\geq \mathcal{A}(M, g)-8\pi t.$$
Reasoning like in the proof of Theorem \ref{thm.pos.ricci} we have
$$\lim_{t\to T}\mathcal{A}(M, g_t)=0.$$
Therefore $\mathcal{A}(M, g) \leq 8\pi T$. But we know from \eqref{lower.r} that $T\leq 1/4$, hence $\mathcal{A}(M, g) \leq 2\pi.$ 
\end{proof}

\begin{rmk} 
Notice that the estimates for the width (Theorem \ref{thm.pos.ricci}) and for the $\mathcal{A}$-invariant (Theorem \ref{A.inequality}), both proved using the Ricci flow,  are better than the basic area estimates for index one and stable minimal surfaces obtained through the Hersch's trick (Appendix \ref{area.estimates}).
\end{rmk}

Let $\mathcal{J}$ be the collection of all embedded minimal surfaces $\Sigma \subset M$ with ${\rm ind}(\Sigma) \leq 1$.

\begin{thm} \label{ricci.rigidity} Suppose  $M$ has positive Ricci curvature and $R\geq 6$. Then there exists an embedded minimal surface $\Sigma$, with ${\rm ind}(\Sigma) \leq 1$, such that
$$
|\Sigma| \leq 4\pi.
$$
Moreover, we have that
$$\inf_{\Sigma \in \mathcal{J}} |\Sigma|= 4\pi$$
if and only if  $g$ has constant sectional curvature one and $M=S^3$.
\end{thm}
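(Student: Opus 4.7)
The plan is to assemble the theorem from the sharp width and $\mathcal{A}$-invariant bounds established earlier in this section, together with the existence results of Section \ref{section.minimal.index}. To produce an element of $\mathcal{J}$ with area at most $4\pi$, I split on whether $M$ admits an embedded non-orientable surface. If it does, Proposition \ref{nonorientable} yields a stable (hence index-zero) embedded minimal surface $\Sigma$ with $|\Sigma|=\mathcal{A}(M,g)$, and Theorem \ref{A.inequality} bounds this by $2\pi$. Otherwise, positive Ricci curvature precludes stable minimal surfaces so $(M,g)$ satisfies the $(\star)_h$-condition for all $h$, and Theorem \ref{heegaard.genus}, applied with $h$ the Heegaard genus of $M$, provides an index-one embedded minimal surface $\Sigma_0$ with $|\Sigma_0|=W(M,\Lambda^h,g)$; Theorem \ref{thm.pos.ricci} then bounds the right-hand side by $4\pi$. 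In either case $\Sigma\in\mathcal{J}$ with $|\Sigma|\leq 4\pi$.

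For the forward direction of the rigidity, I would assume $\inf_{\Sigma\in\mathcal{J}}|\Sigma|=4\pi$. The non-orientable subcase of the previous paragraph would contradict this by producing an element of $\mathcal{J}$ with area at most $2\pi$, so $M$ must contain no non-orientable embedded surface. The minimal surface $\Sigma_0\in\mathcal{J}$ from Theorem \ref{heegaard.genus} then forces $W(M,\Lambda^h,g)\geq 4\pi$, while Theorem \ref{thm.pos.ricci} gives the reverse inequality; these combine to $W(M,\Lambda^h,g)=4\pi$, and the equality clause of Theorem \ref{thm.pos.ricci} yields that $g$ has constant sectional curvature one and $M=S^3$.

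The converse is the step I expect to require the most care. On round $S^3$, any great sphere is totally geodesic and minimal of area $4\pi$; a direct computation of the Jacobi operator shows it has index one, hence lies in $\mathcal{J}$ and gives $\inf\leq 4\pi$. For the matching lower bound one must show every $\Sigma\in\mathcal{J}$ on round $S^3$ has area at least $4\pi$: for minimal $2$-spheres this follows from Theorem \ref{heegaard.genus} applied to round $S^3$ (whose Heegaard genus is zero), which gives $\inf_{S\in\mathcal{E}_0}|S|=W(S^3,\Lambda^0,g)=4\pi$. For positive-genus minimal surfaces one would invoke Urbano-type index estimates asserting that any such surface in round $S^3$ has index at least $5$, hence lies outside $\mathcal{J}$. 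This classical input beyond the framework developed in the paper is the main obstacle in the reverse direction.
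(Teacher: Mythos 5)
Your argument for the existence statement and for the forward direction of the rigidity is exactly the paper's proof: split on whether $M$ contains an embedded non-orientable surface, use Proposition \ref{nonorientable} together with Theorem \ref{A.inequality} in the first case (giving a stable $\Sigma$ with $|\Sigma|\leq 2\pi$, which rules this case out when the infimum is $4\pi$), and Theorem \ref{heegaard.genus} together with Theorem \ref{thm.pos.ricci} in the second, concluding via the equality clause of Theorem \ref{thm.pos.ricci}. The only difference is the converse, which the paper simply omits (treating it as classical); your treatment of it is correct but heavier than necessary. You do not need Urbano's index bound: every closed minimal surface $\Sigma$ in the round $S^3$ satisfies $|\Sigma|\geq 4\pi$, since the cone over $\Sigma$ is a minimal $3$-dimensional cone in $\mathbb{R}^4$ and the monotonicity formula gives density at least one at the vertex, i.e.\ $\tfrac13|\Sigma|\geq \omega_3=\tfrac43\pi$. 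Combined with the fact that an equatorial sphere has index one and area $4\pi$, this gives $\inf_{\Sigma\in\mathcal{J}}|\Sigma|=4\pi$ without any index classification of positive-genus minimal surfaces.
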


\begin{proof}
Suppose $M$ contains non-orientable embedded surfaces. It follows from Proposition \ref{nonorientable} and Theorem \ref{A.inequality} that there exists an embedded  minimal surface $\Sigma \in \mathcal{F}$, with ${\rm ind}(\Sigma)=0$, and $|\Sigma|\leq 2\pi$.

Suppose now that $M$ does not contain non-orientable embedded surfaces, and let $h$ be the Heegaard genus of $M$. Then, $(M,g)$ satisfies the $(\star)_h$-condition  and so, by Theorem \ref{heegaard.genus}, we have the existence of  an embedded minimal surface $\Sigma_2 \subset M$ with ${\rm ind}(\Sigma_2)=1$ and such that $|\Sigma_2| = W(M,\Lambda^h,g)$. Theorem \ref{thm.pos.ricci} implies
$|\Sigma_2| \leq 4\pi$. 

If $\inf_{\Sigma \in \mathcal{J}} |\Sigma|= 4\pi$, then it follows from the previous arguments that $M$ does not contain non-orientable embedded surfaces and $W(M,\Lambda^h,g)=4\pi$. Hence, by Theorem \ref{thm.pos.ricci}, $g$ has constant sectional curvature one and $M=S^3$.
\end{proof}

\medskip 

\begin{rmk} We point out that the surface $\Sigma$ constructed in Theorem \ref{ricci.rigidity} can be chosen to satisfy the genus bound:  $g(\Sigma)\leq \tilde h$. 
\end{rmk}


\medskip

We now turn to the case in which $M$ is diffeomorphic to the $3$-sphere $S^3$, whose Heegaard genus is zero. In that case we can take $\Lambda^0$ to be the smallest saturated set that contains the family $\{\Sigma_t\}$ of   level sets of the height function $x_4:S^3 \subset \mathbb{R}^4 \rightarrow \mathbb{R}$. We define the {\it width} of $(S^3,g)$ to be 
 $$W(S^3,g)=W(S^3,\Lambda^0,g).$$
 
\begin{thm}\label{thm.sphere} Assume $(S^3,g)$ has no stable embedded minimal  spheres.  If $R \geq 6$, there exists an embedded minimal sphere
$\Sigma$, of index one, such that
$$W(S^3,g)=|\Sigma|=\inf_{S \in \mathcal{E}_{0} }|S| \leq 4\pi.$$
The  equality $W(S^3,g)=4\pi$ holds if and only if $g$  has constant sectional curvature one.
\end{thm}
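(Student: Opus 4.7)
The plan is to combine Theorem \ref{heegaard.genus} for existence with Proposition \ref{prop.index.estimate}(iii) for the area bound, and to handle the equality case by a Ricci-flow plus Maximum Principle argument in the spirit of Theorem \ref{thm.pos.ricci}. First I would observe that the Heegaard genus of $S^3$ is $h=0$ and that $S^3$ contains no embedded non-orientable closed surfaces, so the hypothesis ``no stable embedded minimal spheres'' is precisely the $(\star)_0$-condition. Thus Theorem \ref{heegaard.genus} applies and produces an embedded minimal sphere $\Sigma_0\subset S^3$ of index one, together with a distinguished sweepout satisfying (a)--(c), yielding
$$|\Sigma_0| \,=\, \inf_{S\in\mathcal{E}_0}|S| \,=\, W(S^3,\Lambda_{\Sigma_0},g) \,=\, W(S^3,\Lambda^0,g) \,=\, W(S^3,g).$$

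For the area bound $|\Sigma_0|\leq 4\pi$, I would feed $\Sigma_0$ directly into Proposition \ref{prop.index.estimate}(iii). Specialized to an index-one minimal sphere in a three-manifold (i.e.\ $h=g(\Sigma_0)=0$), the Gauss-equation/index computation behind that proposition yields
$$\int_{\Sigma_0}(R-\mathrm{Ric}(\nu,\nu))\,d\mu\,\leq\, 16\pi, \qquad\text{equivalently,}\qquad \int_{\Sigma_0} R\,d\mu\,\leq\, 24\pi - \int_{\Sigma_0}|A|^2\,d\mu,$$
so that $\int_{\Sigma_0} R\,d\mu\leq 24\pi$. Combined with the hypothesis $R\geq 6$ this gives
$$6|\Sigma_0|\,\leq\,\int_{\Sigma_0} R\,d\mu\,\leq\, 24\pi,\quad\text{hence}\quad |\Sigma_0|\,\leq\,4\pi.$$

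For the rigidity statement I would run the Ricci flow $g(t)$ starting at $g$ and mimic the argument in Theorem \ref{thm.pos.ricci}. The Maximum Principle applied to $\partial_t R\geq \Delta R+\tfrac{2}{3}R^2$ with $R(g)\geq 6$ gives $R(g(t))\geq 6/(1-4t)$ and $T\leq 1/4$. If $W(S^3,g)=4\pi$, then the inequalities above are saturated: $\int_{\Sigma_0}|A|^2=0$, so $\Sigma_0$ is totally geodesic, and $R\equiv 6$ on $\Sigma_0$; combining Proposition \ref{width.inequality.1} (applied along the flow, with $h=0$) with the product estimate $\min_M R(g(t))\cdot W(S^3,\Lambda^0,g(t))\leq 24\pi$ then forces $T=1/4$ and $\min_M R(g(t_1))=6/(1-4t_1)$ for every $t_1\in[0,T)$. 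The Strong Maximum Principle applied to the sharp evolution
$$\partial_t R\,=\,\Delta R+\tfrac{2}{3}R^2+2|\mathring{\mathrm{Ric}}|^2$$
then forces $\mathring{\mathrm{Ric}}(g)\equiv 0$, so $g$ is Einstein, hence of constant sectional curvature in dimension three; the normalization $R=6$ identifies $g$ as the round metric of sectional curvature one.

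The principal obstacle in the rigidity step is the propagation of the $(\star)_0$-condition along the Ricci flow, which is what allows Proposition \ref{width.inequality.1} to be applied at each time. In Theorem \ref{thm.pos.ricci} this came for free from Hamilton's preservation of positive Ricci curvature, but under the weaker hypothesis of Theorem \ref{thm.sphere} one must argue persistence separately; the natural route is to analyze the first potential failure time, using that any stable embedded minimal sphere in a metric with $R\geq 6$ satisfies the sharp area bound $|S|\leq 4\pi/3$ (via $\phi\equiv 1$ in the stability inequality combined with Gauss-Bonnet), which in turn bounds the width through an explicit sweepout by shrinking $S$ to a point in each of the two balls it bounds.
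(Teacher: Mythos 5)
Your existence step and the area bound are exactly the paper's: the hypothesis is the $(\star)_0$-condition, Theorem \ref{heegaard.genus} produces the index-one minimal sphere realizing $W(S^3,g)=\inf_{S\in\mathcal{E}_0}|S|$, and Proposition \ref{prop.index.estimate}(iii) with $k_0=6$, $g(\Sigma)=0$ gives $|\Sigma|\leq 4\pi$. The rigidity step, however, has a genuine gap precisely at the point you flag as the ``principal obstacle.'' Your proposed mechanism for persistence of $(\star)_0$ --- that a stable embedded minimal sphere $S$ with $|S|\leq 4\pi/3$ ``bounds the width through an explicit sweepout by shrinking $S$ to a point in each of the two balls it bounds'' --- is false. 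Such a sweepout exists topologically, but its intermediate slices can (and in general must) have area far exceeding $|S|$; there is no upper bound on the width in terms of the area of a stable sphere. Indeed, if shrinking a sphere to a point inside the ball it bounds controlled areas, Theorem \ref{boundary.min-max} would be vacuous. So no contradiction is obtained at your ``first failure time,'' and the claims $T=1/4$ and $\min_M R(g(t_1))=6/(1-4t_1)$ for \emph{all} $t_1\in[0,T)$ are not established.

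The paper's resolution is different on both counts and you should adopt it. First, only \emph{short-time} persistence is needed: if $\min_M R(g(t))=6/(1-4t)$ on some interval $[0,\varepsilon)$, the strong maximum principle applied to the evolution $\partial_t R=\Delta R+\tfrac{2}{3}R^2+2|\mathring{\mathrm{Ric}}|^2$ already forces $g$ to be Einstein; there is no need to control the flow up to its maximal time, and in fact the paper's proof of Theorem \ref{thm.sphere} (unlike that of Theorem \ref{thm.pos.ricci}) never uses $T=1/4$. Second, short-time persistence of $(\star)_0$ is obtained from Proposition \ref{prop.no.stable} in the appendix: by Proposition \ref{prop.index.estimate}(i) every stable embedded minimal surface in $(S^3,g)$ is a sphere of area at most $4\pi/3$, so by hypothesis $(S^3,g)$ has \emph{no} stable embedded minimal surfaces at all; Proposition \ref{prop.no.stable} then rules out stable surfaces of area below a fixed constant $C$ for all metrics $C^{3,\alpha}$-close to $g$, and the a priori area bound from (i) (with $\min R(g(t))\geq 5$, say, for small $t$) shows any stable surface for $g(t)$ would have area below that constant. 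With $(\star)_0$ secured on $[0,\varepsilon)$, Proposition \ref{width.inequality.1} and the product estimate from Theorem \ref{heegaard.genus} plus Proposition \ref{prop.index.estimate}(iii) pin down $\min_M R(g(t))=6/(1-4t)$ there, and the argument closes. (Your observation that equality forces $\int_{\Sigma_0}|A|^2=0$ and $R\equiv 6$ on $\Sigma_0$ is correct but is not used anywhere.)
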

\begin{rmk}
 Theorem \ref{mainthm.1} is an immediate consequence of Theorem \ref{thm.sphere}.
\end{rmk}
\begin{proof}
 $(S^3,g)$ satisfies the $(\star)_0$ -condition because the three-sphere  contains no non-orientable embedded surface. Hence Theorem \ref{heegaard.genus} implies the existence of an index one embedded minimal sphere $\Sigma$ with $|\Sigma|=\inf_{S \in \mathcal{E}_{0} }|S|=W(S^3,g)$. From  Proposition \ref{prop.index.estimate}, part (iii), we have $|\Sigma| \leq 4\pi$.

Suppose $|\Sigma|=4\pi$.   To show that $g$ is Einstein we argue essentially like in Theorem \ref{thm.pos.ricci}. 

Let $g(t)$, $t \in [0,\varepsilon)$, be a  solution of Ricci flow with $g(0)=g$. The Maximum Principle applied to the evolution equation of the scalar curvature  implies 
$$
\min_{M}R(g(t))\geq \frac{6}{1-4t}.
$$
 It  follows from  Proposition \ref{prop.index.estimate}, part  (i),  that any   stable embedded minimal surface in  $(M,g(0))$ would have to be a sphere with  area at most  $\frac{4\pi}{3}$. Hence $(M,g(0))$ does not contain stable embedded minimal surfaces and so, by  Proposition \ref{prop.no.stable} in the Appendix,  $(M,g(t))$ contains no stable  embedded minimal surface provided we choose $\varepsilon$ small enough.  Therefore Proposition \ref{width.inequality.1} implies that $W(M, g(t))\geq 4\pi(1-4t)$.  We know from Theorem \ref{heegaard.genus} that   $W(M, g(t))$ is the area of an index one embedded minimal sphere in $(M,g(t))$ and thus
   Proposition \ref{prop.index.estimate}, part  (iii), implies  that 
 $$\min_{M}R(g(t))\leq \frac{6}{1-4t}.$$
 Therefore $$\min_{M}R(g(t))= \frac{6}{1-4t}$$ and the maximum principle tells us that $g$ is Einstein. Since the dimension is three, $g$ has  constant sectional curvature one.
\end{proof}

 \begin{thm}\label{thm.sphere.scalar.curvature}
Let $g$ be a metric  on $S^3$ with scalar curvature $R \geq 6$. If $g$ does not have constant sectional curvature one, then there exists an embedded minimal sphere $\Sigma$, of index zero or one, with  $|\Sigma| < 4\pi$.
\end{thm}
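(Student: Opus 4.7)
The plan is to prove Theorem \ref{thm.sphere.scalar.curvature} by a simple dichotomy on the existence of stable minimal spheres, using Theorem \ref{thm.sphere} in one branch and an elementary Gauss--Bonnet/stability computation in the other. Assume throughout that $g$ is a metric on $S^3$ with $R\geq 6$ which is not of constant sectional curvature one. Note that $S^3$ is simply connected and hence contains no embedded non-orientable surfaces, so we can ignore the $\mathcal{A}$-invariant part of the theory developed earlier.

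Case 1: $(S^3,g)$ contains a stable embedded minimal sphere $\Sigma$. Testing the stability inequality with the constant function $\phi\equiv 1$ gives $\int_\Sigma(|A|^2+\mathrm{Ric}(\nu,\nu))\,d\mu\leq 0$. The Gauss equation for a minimal surface reads $2K=R-2\,\mathrm{Ric}(\nu,\nu)-|A|^2$, so this can be rewritten as $\int_\Sigma \bigl(R/2 + |A|^2/2 - K\bigr)\,d\mu\leq 0$. Dropping $|A|^2/2$, using $R\geq 6$, and invoking Gauss--Bonnet $\int_\Sigma K\,d\mu=4\pi$ yields $3|\Sigma|\leq 4\pi$, i.e. $|\Sigma|\leq 4\pi/3 < 4\pi$. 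This is exactly the stable-sphere estimate of Proposition \ref{prop.index.estimate}(i), which is invoked elsewhere in the paper. So $\Sigma$ is an embedded minimal sphere with $\mathrm{ind}(\Sigma)=0$ and $|\Sigma|<4\pi$, as required.

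Case 2: $(S^3,g)$ contains no stable embedded minimal sphere. Then the hypothesis of Theorem \ref{thm.sphere} is satisfied, so there exists an embedded minimal sphere $\Sigma$ of index one with $|\Sigma|=W(S^3,g)\leq 4\pi$. The rigidity clause of Theorem \ref{thm.sphere} states that $W(S^3,g)=4\pi$ forces $g$ to have constant sectional curvature one. Since we assumed $g$ is not round, this rules out equality, giving $|\Sigma|=W(S^3,g)<4\pi$.

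There is no real obstacle here: the theorem is essentially a packaging of Theorem \ref{thm.sphere} together with the classical Hersch-type bound for stable minimal spheres under $R\geq 6$. The only point to check is that the two cases are genuinely exhaustive, which is immediate, and that in Case 1 the stable sphere automatically has index zero, which is part of what ``stable'' means in this paper.
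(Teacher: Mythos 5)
Your proof is correct and follows exactly the paper's argument: the same dichotomy on the existence of a stable embedded minimal sphere, with Proposition \ref{prop.index.estimate}(i) (which you re-derive via the stability inequality and Gauss--Bonnet) handling the stable case and the rigidity clause of Theorem \ref{thm.sphere} handling the unstable case. No gaps.
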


\begin{proof}
If $(S^3,g)$ contains a stable embedded minimal sphere $\Sigma$, then $|\Sigma| \leq \frac{4\pi}{3}$ by Proposition \ref{prop.index.estimate},  part (i). If not, $W(M) < 4\pi$ by 
Theorem \ref{thm.sphere} and  the corollary  follows from Theorem \ref{heegaard.genus}.
\end{proof}

\appendix

\section{Area estimates}\label{area.estimates}

If $\Sigma$ is non-orientable, we denote by $\tilde \Sigma$ the orientable double cover of $\Sigma$.

The next proposition collects some basic estimates  for minimal surfaces of index less than or equal to  one. These estimates are  based on the so called Hersch's trick which has been used, among other places, in \cite{bben, bbn, Christodoulou-Yau, Colding-Minicozzi1,  rr, ross,  yy, yau}.
 
 \begin{prop}\label{prop.index.estimate} Assume the scalar curvature of $M$ satisfies $R\geq k_0>0.$ Let $\Sigma$ be an embedded minimal surface of genus $g(\Sigma)$.
 \begin{itemize}
 \item[(i)] If $\Sigma$ is stable  and orientable then it is a sphere with $k_0|\Sigma|\leq 8\pi$. Equality implies that $R=k_0$ on $\Sigma$.
 \item[(ii)] If $\Sigma$ is  stable and non-orientable  then
 $$\int_\Sigma \text{\rm Ric}(\nu,\nu) + |A|^2 \, d\mu\leq 4\pi(g(\tilde \Sigma)+1)$$
and $$k_0|\Sigma|\leq  12\pi+4\pi g(\tilde \Sigma).$$
 \item[(iii)] If $\Sigma$ is index one and orientable then
$$ \int_\Sigma \text{\rm Ric}(\nu,\nu) + |A|^2 \, d\mu\leq 8\pi\left(\left[\frac{g(\Sigma)+1}{2}\right]+1\right)$$ and
$$k_0|\Sigma|\leq 24\pi+16\pi\left(\frac{g(\Sigma)}{2}-\left[\frac{g(\Sigma)}{2}\right]\right),$$
where $[x]$ denotes the integer part of $x$.
 \end{itemize}
 \end{prop}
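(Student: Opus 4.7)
My strategy is to handle each case by combining the Jacobi stability (or index-one) inequality with the traced Gauss equation. Since $\Sigma$ is minimal in $M^3$ we have $H\equiv 0$, so the Gauss equation $R_\Sigma = R - 2\,\mathrm{Ric}(\nu,\nu) - |A|^2$ together with $R_\Sigma = 2K$ yields the pointwise identity
$$|A|^2 + \mathrm{Ric}(\nu,\nu) \;=\; \tfrac{R}{2} - K + \tfrac{1}{2}|A|^2 \;\geq\; \tfrac{k_0}{2} - K.$$
Integrating and invoking Gauss-Bonnet converts any upper bound on $\int_\Sigma(|A|^2 + \mathrm{Ric}(\nu,\nu))\, d\mu$ into the announced area estimate, so the plan is to prove the integral bounds case by case.

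For (i), I would test stability with $\phi\equiv 1$ to get $\int_\Sigma(|A|^2+\mathrm{Ric}(\nu,\nu))\, d\mu\leq 0$. The identity above then forces $\chi(\Sigma)\geq 2$, so $\Sigma\cong S^2$ and $k_0|\Sigma|\leq 8\pi$, with equality forcing $R\equiv k_0$ (and in fact $|A|\equiv 0$) on $\Sigma$.

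For (iii) the key tool is Hersch's trick. Since $\mathrm{ind}(\Sigma)=1$, the first eigenfunction $\varphi_0$ of $L$ can be chosen strictly positive, and any $\phi$ with $\int_\Sigma\phi\,\varphi_0\, d\mu = 0$ satisfies $\int_\Sigma(|A|^2+\mathrm{Ric}(\nu,\nu))\phi^2\,d\mu \leq \int_\Sigma|\nabla\phi|^2\,d\mu$. By the classical gonality (Brill-Noether) bound, $\Sigma$ carries a conformal branched cover $\psi:\Sigma\to S^2$ of degree $d\leq [\tfrac{g(\Sigma)+1}{2}]+1$. I would then use the standard topological degree argument (Hersch's lemma, applied to the probability measure $\varphi_0\,d\mu/\int\varphi_0\,d\mu$ pushed forward by $\psi$) to find a M\"obius automorphism $T$ of $S^2$ such that the components $\phi_i = x_i\circ T\circ\psi$ satisfy $\int_\Sigma\phi_i\,\varphi_0\,d\mu=0$ for $i=1,2,3$. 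Conformality of $T\circ\psi$ with $\deg(T\circ\psi)=d$ gives $\sum_i\int_\Sigma|\nabla\phi_i|^2\,d\mu = 8\pi d$, and summing the test-function inequality over $i$ with $\sum\phi_i^2\equiv 1$ yields the asserted bound $\int_\Sigma(|A|^2+\mathrm{Ric}(\nu,\nu))d\mu \leq 8\pi\bigl([\tfrac{g(\Sigma)+1}{2}]+1\bigr)$.

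For (ii) the same plan runs on the orientable double cover $\tilde\Sigma$, but admissible test functions must be $\tau$-anti-invariant. I would therefore replace $\psi$ by a $\tau$-equivariant branched cover $\tilde\Sigma\to S^2$ with the antipodal action on $S^2$, of degree $d\leq g(\tilde\Sigma)+1$, obtained from an equivariant Brill-Noether/Riemann-Roch argument (equivalently, from a branched cover $\Sigma\to\mathbb{RP}^2$). The components $x_i\circ\psi$ are then automatically anti-invariant, so no Hersch balancing is needed (there is no invariant eigenfunction to orthogonalize against). Summing and descending to $\Sigma$, using that the integrand is $\tau$-invariant, produces $\int_\Sigma(|A|^2+\mathrm{Ric}(\nu,\nu))d\mu \leq 4\pi(g(\tilde\Sigma)+1)$. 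The main technical obstacle I anticipate is pinning down the correct (equivariant) degree bound in (ii); the orientable case (iii) is the classical gonality estimate, whereas (ii) requires its $\mathbb{Z}/2$-equivariant analogue. Once these degree bounds are in hand, the integral inequalities and the subsequent area estimates are direct.
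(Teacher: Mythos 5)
Your proposal is correct and follows essentially the same route as the paper, which handles (i), the first inequality of (ii), and the conformal-map bound in (iii) by citing \cite{bbn}, \cite{ross}, and \cite{yau}, \cite{rr} respectively, and then derives the area estimates exactly as you do, by integrating the traced Gauss equation and applying Gauss--Bonnet. The one ingredient you flag as uncertain --- the existence of a $\tau$-equivariant (antipodally equivariant) conformal branched cover $\tilde\Sigma\to S^2$ of degree at most $g(\tilde\Sigma)+1$ --- is precisely the content of Ross's Lemma~2, so your reconstruction is faithful to the cited arguments.
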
 
 
 \begin{proof}
  The  statement (i) was proven in \cite{bbn} (see identity (4)).
  
  The first inequality in (ii) follows  from Lemma 2 and identity (2') of \cite{ross}. The second inequality is a consequence of the Gauss equation: 
  \begin{eqnarray*}
  \frac{k_0}{2}|\Sigma|\leq \int_{\Sigma}\frac{R}{2}d\mu&=&\int_{\Sigma}\Big(\textrm{Ric}(\nu,\nu)+\frac{|A|^2}{2}+K \Big)d\mu\\
  &\leq& \int_{\Sigma}\Big(\textrm{Ric}(\nu,\nu)+{|A|^2} \Big)d\mu+2\pi(1-g(\tilde \Sigma))\\
  &\leq& 6\pi+2\pi g(\tilde \Sigma).
  \end{eqnarray*}

 If $\Sigma$ is orientable of index one, then
 $$ \int_\Sigma \Big(\text{\rm Ric}(\nu,\nu) + |A|^2\Big) \, d\mu\leq 8\pi \textrm{deg}(\phi),$$
where $\phi:\Sigma\longrightarrow S^2$ is a conformal map (see page 127 of \cite{yau}). Since we can choose $\phi$ satisfying $$\textrm{deg}(\phi)\leq \left[\frac{g(\Sigma)+1}{2}\right]+1,$$
as in page 299 of \cite{rr},  the first inequality in (iii) follows. The last inequality again follows from the Gauss equation:
\begin{eqnarray*}
  \frac{k_0}{2}|\Sigma|\leq \int_{\Sigma}\frac{R}{2}d\mu&\leq&\int_{\Sigma}\Big(\textrm{Ric}(\nu,\nu)+\frac{|A|^2}{2}+K \Big)d\mu\\
  &\leq& 8\pi\left(\left[\frac{g(\Sigma)+1}{2}\right]+1\right)+4\pi(1-g(\Sigma))\\
  &=&12\pi+{8\pi}\left(\frac{g( \Sigma)}{2}-\left[\frac{g( \Sigma)}{2}\right]\right).
  \end{eqnarray*}
   \end{proof}

\section{Compactness}
For completeness we include the proof of a well known result among the specialists.

 \begin{prop}\label{prop.no.stable} Suppose $(M,g)$ is  a compact Riemannian 3-manifold that contains   no stable embedded minimal surfaces. Given a constant $C>0$,  there exists a $C^{3,\alpha}$ neighborhood $\mathcal{U}$ of $g$ so that every metric $g'$ in $\mathcal{U}$ contains  no stable embedded minimal surface of  area smaller than  $C$.
 \end{prop}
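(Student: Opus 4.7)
The plan is to argue by contradiction. Suppose the conclusion fails for some $C>0$: there exist metrics $g_k \to g$ in $C^{3,\alpha}$ and stable embedded minimal surfaces $\Sigma_k \subset (M,g_k)$ with $|\Sigma_k|_{g_k} < C$ for every $k$. The goal is to extract a subsequential limit that is itself a stable embedded minimal surface in $(M,g)$, contradicting the hypothesis that $(M,g)$ admits none.

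The first step is a uniform bound on the second fundamental form. I will invoke the classical curvature estimate for two-sided stable minimal surfaces in three-manifolds due to Schoen (and its extension to the one-sided case, applied on the oriented double cover), which yields $\sup_{\Sigma_k}|A_{\Sigma_k}|_{g_k}^2 \leq K(g_k)$, where $K$ depends continuously on ambient geometric quantities such as the sectional curvature and the injectivity radius. Since $g_k \to g$ in $C^{3,\alpha}$, these ambient quantities are uniformly controlled, so we obtain a $k$-independent bound $|A_{\Sigma_k}|_{g_k} \leq K_0$.

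The second step is compactness. The upper area bound $|\Sigma_k|_{g_k} < C$, together with a uniform positive lower bound $|\Sigma_k|_{g_k} \geq c_0 > 0$ supplied by the monotonicity formula applied at any point of $\Sigma_k$ (whose constants are uniform in $k$ by $C^{3,\alpha}$ convergence), and the uniform curvature bound, allow me to apply the standard compactness theorems for minimal surfaces (as in Theorem 3 of \cite{white} or Theorem 4.2 of \cite{anderson}, already cited in the proof of Theorem \ref{heegaard.genus}) to pass to a subsequence $\Sigma_{k_j}$ that converges smoothly, possibly with finite integer multiplicity, to a smooth embedded minimal surface $\Sigma_\infty \subset (M,g)$. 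The uniform curvature bound forces the convergence to be graphical on uniformly sized disks, so $\Sigma_\infty$ is genuinely smooth and embedded and has positive area.

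The final step is to pass stability to the limit. Given a test function $\phi \in C^\infty(\Sigma_\infty)$ (in the non-orientable case, an anti-invariant function on the double cover), I lift it to approximations $\phi_{k_j} \in C^\infty(\Sigma_{k_j})$ via the graphical convergence. Because $A_{\Sigma_{k_j}} \to A_{\Sigma_\infty}$ and $\mathrm{Ric}_{g_{k_j}} \to \mathrm{Ric}_g$ smoothly on each graphical piece, the stability inequalities satisfied by $\Sigma_{k_j}$ pass to the limit and show that $\Sigma_\infty$ is stable. The main point to guard against is degeneration of the limit, either to a lower-dimensional set or to a non-smooth varifold; this is exactly what the combination of the Schoen curvature estimate and the monotonicity-based area lower bound rules out. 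The resulting stable embedded minimal surface $\Sigma_\infty \subset (M,g)$ contradicts the hypothesis, completing the proof.
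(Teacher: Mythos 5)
Your proposal is correct and follows the same overall strategy as the paper's proof: argue by contradiction, use Schoen's curvature estimate for stable surfaces to get a uniform bound on the second fundamental form, combine this with the area bound to extract a smooth graphical subsequential limit, and pass stability to the limit. The only place the two arguments genuinely diverge is the treatment of multiplicity and orientability. The paper first observes that $(M,g)$ can contain no non-orientable embedded surface at all (otherwise, by Meeks--Simon--Yau, minimizing area in its isotopy class would produce a stable embedded minimal surface), so every surface in sight is two-sided; it then shows the multiplicity of the convergence is exactly one, because the $m$ sheets are globally ordered graphs in the trivial normal bundle of the limit, so $m \geq 2$ would force the connected surfaces $\Sigma_i$ to be disconnected. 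With multiplicity one the convergence is in $C^{2,\alpha}$ and stability of the limit is immediate. You instead allow multiplicity and transplant test functions to the sheets; this also works (the limiting quadratic form is $m$ times the stability form of $\Sigma_\infty$, hence still nonnegative), and your remark about anti-invariant functions on the double cover handles the one-sided limits that the paper excludes at the outset. Your route is marginally shorter; the paper's is more elementary at this step in that it never has to discuss stability through multiplicity or of one-sided limits.
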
 
 \begin{proof}
 We start by arguing that $M$ contains no non-orientable embedded  surface.  From \cite[Lemma 1]{msy} we have that non-orientable  surfaces can not have arbitrarily small area and thus we can minimize area in their isotopy class \cite[Theorem 1]{msy} to obtain a stable embedded minimal surface, a contradiction.

Assume $g_i$ is a sequence of metrics converging  in $C^{3,\alpha}$ to $g$ with $\Sigma_i$ a sequence of connected  stable embedded minimal surfaces in $(M,g_i)$ with area smaller than $C$. From Schoen's curvature estimate for stable surfaces \cite[Theorem 3]{schoen} we have that the second fundamental form of $\Sigma_i$ is uniformly bounded and so, because their area is bounded, we obtain a uniform genus bound for all $\Sigma_i$. Standard compactness arguments imply that, after passing to a subsequence, $\Sigma_i$ converges to a smooth minimal surface $\Sigma$ (with multiplicity $m$) which is embedded by the maximum principle. Furthermore (for instance \cite[Lemma 1]{white} or \cite[Theorem 4.2.1]{perez}), there is $r_0$ small so that, for all $i$ sufficiently large and all $x\in \Sigma$, we have
\begin{equation}\label{graphical}
\Sigma_i\cap B_{r_0/2}(x)\subset\bigcup_{j=1}^m\left\{\exp_y(f^i_j(y)\nu(y))\,|\, y\in \Sigma\cap B_{r_0}(x)\right\}\subset \Sigma_i\cap B_{2r_0}(x),
\end{equation}
where $\nu$ is a unit normal vector field in $\Sigma$ computed with respect to $g$, $\exp_x$ is the exponential map at $x$ for the metric $g$, and the $C^{2,\alpha}$-norm of each $f^i_j$  tends to zero when $i$ tends to infinity. 

 Consider the continuous sections $X_i, Y_i$ of the normal bundle of $\Sigma$ given by $$X^i(x)=\max_{j\in\{1,\ldots,n\}}\{f_j^i(x)\}\nu(x), \quad Y^i(x)=\min_{j\in\{1,\ldots,n\}}\{f_j^i(x)\}\nu(x).$$

The fact that $\Sigma_i$ is embedded implies that, for all $i$ sufficiently large,  there is $\varepsilon_i>0$ so that $|f_i^j(x)-f^k_j(x)|>\varepsilon_i$ for all $x\in \Sigma$ and all $j\neq k$ in $\{1,\cdots,m\}$ which means $X_i, Y_i$ are smooths sections of the normal bundle. Because this bundle is trivial there are smooth functions $a_i, b_i$ defined on  $\Sigma$ so that  $X_i=a_i\nu, Y_i=b_i\nu$.  Thus, from \eqref{graphical}, we obtain that $m=1$   because otherwise $\Sigma_i$ would have at least two distinct connected components.  Therefore   $\Sigma_i$ converges in $C^{2,\alpha}$ to $\Sigma$ and so $\Sigma$ must be stable as well. This proves the lemma.
 \end{proof}

\bibliographystyle{amsbook}

\end{document}